\newtheorem{theorem}{Theorem}[section]
\newtheorem{lemma}[theorem]{Lemma}
\newtheorem{corollary}[theorem]{Corollary}
\newtheorem{definition}[theorem]{Definition}
\newtheorem{example}[theorem]{Example}
\newtheorem{remark}[theorem]{Remark}
\numberwithin{equation}{section}
\DeclareMathOperator*{\ei}{ess\inf}
\newcommand{\bp}{\noindent{\bf Proof. }}
\newcommand{\ep}{\hfill$\Box$\medskip}
\font\doppio=msbm10 at 12pt
\font\corsivo=rsfs10 at 12pt
\newcommand{\RR}{\mathbb R}
\newcommand{\G}{\mathbb G}
\newcommand{\A}{\mathscr A}
\newcommand{\hei}{\hbox{\doppio H}}     %%gruppo di heisenberg
\newcommand{\mint}{{\int\!\!\!\!\!\!-}}
\newcommand{\diver}{\mathrm{div}} %% divergenza
\newcommand{\C}{\mbox{\corsivo C}}
\newcommand{\Cuno}{\mbox{\corsivo C}^{\,1}}
\newcommand{\RN}{\RR^N}
\newcommand{\abs}[1]{\left| #1 \right|} %\abs :=
\newcommand{\norm}[1]{\left|\left| #1 \right|\right|} %\abs :=
\newcommand{\plap}[1]{\diverl(\abs{\grl #1}^{p-2}\grl #1) }
\newcommand{\qlap}[1]{\diverl(\abs{\grl #1}^{q-2}\grl #1) }
\newcommand{\plapE}[1]{\diver(\abs{\nabla #1}^{p-2}\nabla #1) }
\newcommand{\qlapE}[1]{\diver(\abs{\nabla #1}^{q-2}\nabla #1) }
\newcommand{\grh}{\nabla_{\!\!H}} %% gradiente_H
\newcommand{\lh}{\Delta_H} %% laplaciano_H
\newcommand{\grl}{\nabla_{\!\!L}} %% gradiente_L
\newcommand{\diverl}{\diver_{\!L}} %% divergenza_L
\newcommand{\decl}{:=}
\title{ Entire solutions of quasilinear elliptic systems\\
on Carnot Groups}
\author{Lorenzo D'Ambrosio 
\\
{\small  Dipartimento di Matematica,
Universit\`a degli Studi di Bari}\\
{\small via E. Orabona, 4,  I-70125 Bari, Italy,
 {\tt dambros@dm.uniba.it}} \\ \\
 Enzo Mitidieri\thanks{Corresponding author, e-mail {\tt mitidier@units.it.}}
\\
{\small Dipartimento di Matematica e Geoscienze,
Universit\`a degli Studi di Trieste}\\
{\small via A.Valerio, 12/1, I-34127 Trieste, Italy,
 {\tt mitidier@units.it  }}
\\
\\ {\it
{To the memory of 
Professor Lev Dmitrievich Kudryavtsev}}}
\date{September 15, 2012}
\begin{document}

\maketitle

\begin{abstract} We prove general a priori estimates of solutions  of  a class of quasilinear elliptic system on Carnot groups.
As a consequence, we obtain several non--existence theorems. 
The results are new even in the Euclidean setting.
\end{abstract}

\noindent{\footnotesize{\it Keywords:} Quasilinear elliptic systems; {\it A priori} estimates; Non--existence theorems; Positive solutions; Carnot groups.}

%\begin{keyword}

%{\it 2000 Mathematics Subject Classification.} Primary: ; Secondary: .
%\end{keyword}

%\end{frontmatter}

\section{Introduction}\label{sec1}
As it is well known, one of the main problems in the theory of nonlinear partial differential equations is to find a priori bounds on the possible solutions of the problem under consideration. This information is crucial from several point of view. 

 On one hand the bounds that one can prove may be used for improved regularity properties of the solutions and on the other hand
these results are crucial for establishing special qualitative properties of them. For a recent contribution in this direction see D'Ambrosio, Farina, 
Mitidieri and Serrin \cite{DFM} and D'Ambrosio and Mitidieri \cite{dmginz}.

In this paper we consider a class of quasilinear elliptic systems on Carnot groups and  prove general {\it a priori} estimates of positive solutions  in an open set of $\mathbb R^N$.

There are several  recent studies dealing with this problem in the Euclidean framework. See for instance \cite{Gidas, mp1, cfm, mp, bvp, SZ, dam09, dm}.

To our knowledge, this is the first attemp to prove general estimates of solutions of quasilinear systems on structures which are not necessarily Euclidean.

Among other possibilities, this allows to extend known existence results related to the classical Dirichlet problem in Euclidean setting to the Carnot framework by using topological methods via blow-up procedure  in the same spirit of \cite{gs, cfm, ACM, zou}.

In this paper we prove {\it a priori} estimates for the solutions of elliptic systems in an open set $\Omega\subseteq\RR^N$ involving quasilinear operators in divergence form. As a consequence, we obtain some nonexistence results for these problems in all of $\RR^N$. 

 Earlier contributions on the {\it nonexistence question}  for semilinear {\it  scalar}  subelliptic problems with power nonlinearities  were obtained by Capuzzo-Dolcetta and Cutr{\`{\i}} \cite{cap}, Birindelli, Capuzzo-Dolcetta and  Cutr{\`{\i}} \cite{BIR1}. The quasilinear case was studied by D'Ambrosio \cite{dam09}. 
More recently, for general nonlinearties, the quasilinear {\it scalar  case}  has been studied in D'Ambrosio and Mitidieri \cite{dm}, \cite{dmsb} and \cite{dmginz}. 
 
 The results proved in this paper are new even in the Euclidean setting.

To be more precise our aim is to study problems of the type,
$$
\begin{cases}-\mathrm{div}(\mathscr A_p(x,u,\nabla u))\ge f(x,u,v) \quad&\mbox{on }\Omega,\\
\\
-\mathrm{div}(\mathscr A_q(x,v,\nabla v))\ge g(x,u,v) \quad&\mbox{on }\Omega,\\
\\
u\ge0, \; v\ge0\quad&\mbox{on } \Omega,
\end{cases}\eqno{(P)}
$$
where, 
$$\mathscr A_p, \mathscr A_q:\Omega\times\mathbb R\times\mathbb R^N\to\mathbb R^N$$ 
are strongly-$p$-coercive and strongly-$q$-coercive ($p,q>1$) respectively, 
and
$$f, g:\Omega\times[0,\infty)\times [0,\infty)\to[0,\infty)$$ are Carath\'edory functions. 
On the possible solution $(u,v)$ of (P), we do not require 
any kind of behavior near the boundary of $\Omega$ or at infinity.

Throughout this work, we shall  essentially use the same  ideas as in \cite{dm}, 
where we deal with general  estimates of solutions of scalar differential inequalities.

One of the typical result proved in this paper in the Euclidean setting is the following.
Let $\Omega=\RR^N,$ $f(x,u,v)=f(v)$ and $g(x,u,v)=g(u)$.
Suppose  that the following local assumptions on  the  {\it behavior near zero}
 of $f$ and $g$ hold:
 $$ \liminf_{t\to 0}\frac{f(t)}{t^{{a}}}>0\quad (possibly\ +\infty), \ \ with \quad {a}>0,\eqno{(f_0)}$$
  $$\liminf_{t\to 0}\frac{g(t)}{t^{{b}}}>0 \quad (possibly \ +\infty),\quad with\ b>0. \eqno{(g_0)}$$

{\it Let $(u,v)$ be a weak solution of (P) such that $\ei_{\mathbb R^N}u=\ei_{\mathbb R^N}v=0$. 
If
  \begin{equation}    \label{eq:hyp0}
    \min\left\{ N-{p}-({p}-1)\frac{{q}}{{b}},\ \  N-{q}-({q}-1)\frac{{p}}{{a}} \right\}\le 
  N\frac{({p}-1)({q}-1)}{{a}{b}},  \end{equation}
then $u=v=0$ a.e in $\RR^N$.} See Theorem \ref{th:mainSPC} below.\smallskip

We point out that the results proved in this paper are sharp.
To see this, one needs only to slightly  modify the examples contained in 
\cite{dm, dam-mit12}. We shall omit the tedious details.

As a concrete illustration of other results proved in this paper (see Section \ref{sec4}) we have.

Let $a\in\RR$ 
  and let $h:\RR\to ]0,+\infty[$ be a  continuous  function, then
  the problem
 \begin{equation}
   \begin{cases}-\Delta u\ge v^{a},  & on\quad\RR^3, \\ 
   \\
     -\Delta v\ge h(u) (1-\cos u), & on\quad\RR^3,\\
     \\
   u>0,\ v> 0,
 \end{cases}\end{equation}
has no non constant weak solutions. For details see Example \ref{ex:cos}.

Another example of application is the following. 

Let $\gamma,\delta \in\RR$ and let $h: \RR^+\times\RR^+\to [0,+\infty[$ be a 
  continuous nonnegative function. Then the system
 \begin{equation}
   \begin{cases}-\plapE u\ge \abs{v-1}^{\gamma},  & on\quad\RN, \\ 
   \\
     -\qlapE v\ge v^\delta+h(u,v), & on\quad\RN,\\
     \\
   u>0,\ v> 0,
 \end{cases}\end{equation}
 has no weak solutions. See Example \ref{ex:h} for a generalized version and details.

\medskip

The paper is organized as follows. In Section \ref{sec2} we give some  definitions and present few preliminary results focusing on the weak Harnack inequality and some of its consequences. Section \ref{sec3} is  devoted to the general {\it a priori} estimates for weak solutions of problem $(P)$, while in 
Section \ref{sec4} we prove our main results concerning the non--existence of non--trivial solutions of $(P)$.
Section \ref{sec5} contains some indications on 
 some extensions of the results obtained in the preceding sections to general classes of quasilinear differential operators. Finally in Appendix \ref{sec6} we quote some well known facts on Carnot groups.  

\section{Preliminaries}\label{sec2}
\bigskip

Throughout this paper we shall use some concepts briefly described in the Appendix \ref{sec6}.
For further details related to Carnot groups the interested reader may refer to \cite{bon-lan-ugu07}.

To begin with let us fix a homogeneous norm $S.$ For $R>0$,
we consider $B_R$  the ball of radius $R>0$ generated by $S$, i.e.
$B_R\decl \{x : S(x)<R\}.$  We shall also denote by  $A_R$ the annulus
$B_{2R}\setminus\overline{B_R}$.
By using the dilation $\delta_R$ and the fact that the Jacobian
of $\delta_R$ is $R^Q$, we have
\[ |B_R|=\int_{B_R} dx=R^Q\int_{B_1}dx=w_SR^Q\quad
\mathrm{and}\quad|A_R|=w_S(2^Q-1)R^Q,\]
where $w_S$ is the Lebesgue measure of the unit ball $B_1$ in $\mathbb R^N$.

Let  $p>1$ and denote by $W^{1,p}_{L,{loc}}$ the space
$$W^{1,p}_{L,{loc}}(\Omega):=\left\{u\in L^p_{{loc}}(\Omega)\,:\, |\grl u|\in L^p_{{loc}}(\Omega)\right\}.$$

Consider the system of inequalities
\begin{equation}\label{P0}
\begin{cases}-\plap u \ge f(x,u,v) \quad&\mbox{on }\Omega,\\
\\
-\qlap v \ge g(x,u,v) \quad&\mbox{on }\Omega,
\end{cases}
\end{equation}
where $\Omega\subseteq \mathbb R^N$ is an open set, 
and $f, g:\Omega\times[0,\infty)\times [0,\infty)\to[0,\infty)$  are Carath\'edory functions.

\begin{definition}
  A pair of functions 
  $(u,v)\in W^{1,p}_{L,{loc}}(\Omega)\times W^{1,q}_{L, {loc}}(\Omega)$ is a 
  {\it weak solution of} \eqref{P0} if
  $f(\cdot,u,v)$, $g(\cdot,u,v)\in L^1_{loc}(\Omega)$, 
  and the following inequalities hold
  \begin{equation}\label{ws1}
    \int_{\Omega} \abs{\grl u}^{p-2}\grl u\cdot\grl \phi_1\ge\int_{\Omega} f(x,u,v)\phi_1 
  \end{equation}
  \begin{equation}\label{ws2}
    \int_{\Omega} \abs{\grl v}^{q-2}\grl v\cdot\grl \phi_2\ge\int_{\Omega} g(x,u,v)\phi_2
  \end{equation}
  for all non--negative functions $\phi_1$, $\phi_2\in \Cuno_0(\Omega)$.
\end{definition}

\begin{lemma}[Weak Harnack inequality \cite{cdg,s,t,tw}]\label{lewh} 
  Let $Q>p>1$.
  If $u\in W^{1,p}_{L, loc}(\mathbb R^N)$ is a weak solution of
$$\begin{cases} -\plap u \ge 0 \quad&\mbox{\rm on }\Omega,\\
\\
u\ge0\quad&\mbox{\rm on }\Omega,\end{cases}$$
 then for any $\sigma\in\left(0,\frac{Q(p-1)}{Q-p}\right)$ there exists a constant $c_H>0$ independent of $u$ such that for all $R>0$
$$\left(\frac{1}{|B_R|}\int_{B_R}u^\sigma\right)^{1/\sigma}\le c_H\, \ei_{B_{R/2}}u. \eqno{\mbox{\rm(WH)}}$$
\end{lemma}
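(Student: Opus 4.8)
The plan is to prove the weak Harnack inequality by Moser's iteration technique, adapted to the group structure; this is the classical route to the estimate, and the references \cite{cdg,s,t,tw} carry it out in exactly this generality. The two group-specific facts I would invoke are the horizontal Sobolev inequality, $\norm{w}_{L^{p^\ast}(B_R)}\le c\norm{\grl w}_{L^{p}(B_R)}$ for $w\in\Cuno_0(B_R)$ with $p^\ast:=Qp/(Q-p)$ (available because $Q>p$, and scale invariant under the dilations $\delta_R$), and the Poincar\'e inequality $\int_{B_R}\abs{w-w_{B_R}}^{p}\le cR^{p}\int_{B_R}\abs{\grl w}^{p}$. Since $\abs{B_R}=w_SR^{Q}$ and all the estimates below are dimensionally consistent, the resulting constant will depend only on $p$, $Q$, $\sigma$ and the fixed homogeneous norm $S$; in particular it will be independent of $u$ and of $R$.

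First I would establish the Caccioppoli-type estimates. Because a weak solution need not be locally bounded, I would work with $\bar u:=u+\varepsilon$ ($\varepsilon>0$) and its truncations $\min(\bar u,m)$, testing the differential inequality $-\plap u\ge0$ against $\phi=\eta^{p}\bar u^\alpha$ with $\eta\in\Cuno_0(B_R)$ nonnegative and $\alpha\neq0$; distributing $\grl$, absorbing by Young's inequality, and letting $m\to\infty$ gives, with $\gamma:=(\alpha+p-1)/p$,
$$\int_{B_R}\eta^{p}\abs{\grl\bar u^\gamma}^{p}\ \le\ \frac{C\gamma^{p}}{\abs{\alpha}^{p}}\int_{B_R}\abs{\grl\eta}^{p}\,\bar u^{\gamma p}.$$
Feeding $w=\eta\,\bar u^\gamma$ into the Sobolev inequality, with $\eta$ a cut-off equal to $1$ on $B_r$ and supported in $B_\rho$ ($r<\rho\le R$, $\abs{\grl\eta}\lesssim(\rho-r)^{-1}$), yields the reverse--H\"older step: writing $k:=\gamma p$ and $\chi:=Q/(Q-p)>1$, the $L^{k}$ average of $\bar u$ over $B_\rho$ controls its $L^{\chi k}$ average over $B_r$, up to the factor $\big(CR/(\abs{\alpha}(\rho-r))\big)^{p/k}$ (with due attention to the sign of $1/k$).

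Next I would run the iteration in two regimes. For $\alpha\in(1-p,0)$, so that $k\in(0,p-1)$, iterating the reverse--H\"older step finitely many times over radii decreasing from $R$ to $\tfrac34R$ pushes the exponent upward; the constraint $k<p-1$ on the \emph{input} power is precisely what makes the scheme stall once the power attains $p-1$, so that one reaches every $\sigma<(p-1)\chi=\frac{Q(p-1)}{Q-p}$ --- exactly the range appearing in the statement --- and obtains, for a fixed small $s_0>0$, $\big(\frac{1}{\abs{B_{3R/4}}}\int_{B_{3R/4}}u^\sigma\big)^{1/\sigma}\le C\big(\frac{1}{\abs{B_R}}\int_{B_R}\bar u^{s_0}\big)^{1/s_0}$. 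For $\alpha<1-p$, so that $k<0$, the analogous downward iteration --- using that $\big(\frac{1}{\abs{B}}\int_{B}\bar u^{t}\big)^{1/t}\to\ei_{B}\bar u$ as $t\to-\infty$ --- gives $\big(\frac{1}{\abs{B_R}}\int_{B_R}\bar u^{-s_0}\big)^{-1/s_0}\le C\,\ei_{B_{R/2}}\bar u$.

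The step I expect to be the main obstacle is bridging the small positive and the small negative exponents. Here I would use the borderline choice $\alpha=1-p$ ($\gamma=0$), for which the Caccioppoli computation together with Young's inequality yields $\int_{B_R}\abs{\grl\log\bar u}^{p}\le CR^{Q-p}$; the Poincar\'e inequality then shows that $\log\bar u$ has bounded mean oscillation on $B_R$, with a bound independent of $u$, $\varepsilon$ and $R$. Since a Carnot group is a space of homogeneous type, the John--Nirenberg lemma applies and produces an $s_0>0$ with $\frac{1}{\abs{B_R}}\int_{B_R}e^{\,s_0\abs{\log\bar u-(\log\bar u)_{B_R}}}\le C$, hence $\big(\frac{1}{\abs{B_R}}\int_{B_R}\bar u^{s_0}\big)^{1/s_0}\le C\big(\frac{1}{\abs{B_R}}\int_{B_R}\bar u^{-s_0}\big)^{-1/s_0}$. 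Chaining the three displayed estimates and letting $\varepsilon\to0^{+}$ (monotone convergence on the left, $\ei_{B_{R/2}}\bar u\to\ei_{B_{R/2}}u$ on the right) gives the asserted inequality with $B_{3R/4}$ in place of $B_R$; a routine covering and chaining argument, based on the doubling identity $\abs{B_{2R}}=2^{Q}\abs{B_R}$, then upgrades the integral on the left from $B_{3R/4}$ to $B_R$, completing the proof.
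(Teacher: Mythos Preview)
The paper does not supply a proof of this lemma: it is stated as a known result and attributed to the references \cite{cdg,s,t,tw}. Your outline is the classical Moser iteration argument (Caccioppoli estimate with test function $\eta^{p}\bar u^{\alpha}$, Sobolev step, upward and downward iteration, and the John--Nirenberg bridge for $\log\bar u$), which is precisely the route taken in those references in this subelliptic generality; so there is nothing to compare against in the paper itself, and your approach is correct and standard.
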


Motivated by the above results, as in \cite{dm}, we introduce the following definition.

\begin{remark}\label{consWH} Inequality (WH) implies immediately that if
    $u\in W^{1,p}_{L,loc}(\RN)$ is a weak solution of
$$\begin{cases} -\plap u \ge 0 \quad&\mbox{\rm on }\RN,\\ \\
u\ge0\quad&\mbox{\rm on }\RN,\end{cases}$$
 then
 either $u\equiv 0$ or $u>0$ a.e. on $\RN$.
 Therefore, without loss of generality we shall limit to study positive solutions.
\end{remark}

\begin{lemma}[Lemma 3.1 of \cite{dm}]\label{le3.1} Let $u:\mathbb R^N\to [0,\infty)$ be a measurable function such that $\ei_{\mathbb R^N}u=0$. Assume that {\rm (WH)} holds with exponent $\sigma>0$, then for all $\varepsilon>0$
$$\lim_{R\to\infty}\frac{|A_{R/2}\cap T_\varepsilon^u|}{|A_{R/2}|}=1,\qquad \lim_{R\to\infty}\frac{|B_R\cap T_\varepsilon^u|}{|B_R|}=1,$$
where $T_\varepsilon^u=\{x\in\mathbb R^N\, :\, u(x)<\varepsilon\}$.
\end{lemma}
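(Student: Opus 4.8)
The plan is to derive both limits from the weak Harnack inequality \textrm{(WH)} combined with an elementary Chebyshev estimate; the crucial observation is that $\ei_{\RN} u = 0$ forces $\ei_{B_{R/2}} u \to 0$ as $R \to \infty$.

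First I would note that the balls $B_{R/2}$ increase to $\RN$, so $R \mapsto \ei_{B_{R/2}} u$ is non-increasing and bounded below by $\ei_{\RN} u = 0$; hence it converges to some $L \ge 0$. If $L > 0$, pick $m$ with $0 < m < L$: then $\abs{\{x \in B_{R/2} : u(x) < m\}} = 0$ for every $R$, and letting $R \to \infty$ (the union of these sets being $\{x \in \RN : u(x) < m\}$) we would get $\abs{\{u < m\}} = 0$, contradicting $\ei_{\RN} u = 0$. Thus $\eta_R := \ei_{B_{R/2}} u \to 0^+$, and \textrm{(WH)} gives, for all $R > 0$,
\begin{equation*}
  \int_{B_R} u^\sigma \le c_H^{\,\sigma}\, \eta_R^{\,\sigma}\, \abs{B_R}.
\end{equation*}

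Next, fixing $\varepsilon > 0$ and writing $B_R \setminus T_\varepsilon^u = \{x \in B_R : u(x) \ge \varepsilon\}$, Chebyshev's inequality together with the previous bound gives
\begin{equation*}
  \abs{B_R \setminus T_\varepsilon^u} \le \varepsilon^{-\sigma} \int_{B_R} u^\sigma \le \left(\frac{c_H\, \eta_R}{\varepsilon}\right)^{\!\sigma} \abs{B_R}.
\end{equation*}
Dividing by $\abs{B_R}$ and letting $R \to \infty$ yields $\abs{B_R \cap T_\varepsilon^u}/\abs{B_R} \to 1$, the second assertion. For the first, since $A_{R/2} = B_R \setminus \overline{B_{R/2}} \subset B_R$ we have $\abs{A_{R/2} \setminus T_\varepsilon^u} \le \abs{B_R \setminus T_\varepsilon^u}$, while $\abs{B_R} = w_S R^Q$ and $\abs{A_{R/2}} = w_S (2^Q - 1)(R/2)^Q$ give the fixed ratio $\abs{B_R}/\abs{A_{R/2}} = 2^Q/(2^Q - 1)$; hence
\begin{equation*}
  \frac{\abs{A_{R/2} \setminus T_\varepsilon^u}}{\abs{A_{R/2}}} \le \frac{2^Q}{2^Q - 1}\left(\frac{c_H\, \eta_R}{\varepsilon}\right)^{\!\sigma} \longrightarrow 0 ,
\end{equation*}
which is the first assertion.

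There is no real obstacle here: the whole argument reduces to Chebyshev's inequality plus the explicit scaling of $\abs{B_R}$ and $\abs{A_R}$, and the only point that needs a careful (if routine) justification is the monotone convergence $\ei_{B_{R/2}} u \to \ei_{\RN} u$ established in the first step.
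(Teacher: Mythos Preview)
Your argument is correct. The paper itself does not supply a proof of this lemma: it is simply quoted as Lemma~3.1 of \cite{dm}, so there is nothing in the present paper to compare your proof against. The route you take---passing from $\ei_{\RN}u=0$ to $\ei_{B_{R/2}}u\to 0$ by monotonicity, feeding this into \textrm{(WH)} to control $\int_{B_R}u^\sigma$, and then applying Chebyshev's inequality to bound $|B_R\setminus T_\varepsilon^u|$ (and hence $|A_{R/2}\setminus T_\varepsilon^u|$, using the fixed ratio $|B_R|/|A_{R/2}|=2^Q/(2^Q-1)$)---is exactly the natural proof and is presumably what appears in \cite{dm}.
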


Finally, few words on the hypothesis   $Q>p$. This assumption
is quite natural since the following holds.
\begin{theorem}\label{parabolic}  Let  $u\in W^{1,p}_{L,{loc}}(\RN)$ is a weak solution of
  $$\begin{cases} -\plap u \ge 0 \quad&\mbox{\rm on }\RN,\\ \\
u\ge0\quad&\mbox{\rm on }\RN.\end{cases}$$
  If $p\ge Q$, then $u$ is constant.
\end{theorem}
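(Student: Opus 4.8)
The plan is to derive a contradiction from the assumption that $u$ is a nonconstant nonnegative $p$-superharmonic function (in the weak sense) on $\RN$ when $p\ge Q$. First, since $-\plap u\ge 0$, by Remark~\ref{consWH} the Weak Harnack inequality already handles the case $Q>p$; for $p\ge Q$ we instead exploit a Caccioppoli-type inequality together with a capacity/logarithmic-cutoff argument adapted to the Carnot group structure. Concretely, I would test the weak inequality \eqref{ws1} (with $f\equiv 0$) against $\phi=\eta^p u^{1-p}$ — or, more robustly, against $\phi=\eta^p\,\psi(u)$ for a suitable bounded Lipschitz $\psi$ — where $\eta\in\Cuno_0(B_{2R})$ is a standard cutoff with $\eta\equiv 1$ on $B_R$ and $|\grl\eta|\le c/R$. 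This yields an estimate of the form
$$
\int_{B_R}\frac{|\grl u|^p}{(u+\varepsilon)^{p}}\,dx \le C\int_{B_{2R}}|\grl\eta|^p\,dx \le C\,w_S\,(2^Q-1)\,R^{Q-p},
$$
using the volume computation $|B_R|=w_SR^Q$ recorded in Section~\ref{sec2}.

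The key point is the exponent $Q-p\le 0$. Letting $R\to\infty$, the right-hand side stays bounded (and tends to $0$ when $p>Q$, or is uniformly bounded when $p=Q$), so $\grl(\log(u+\varepsilon))\in L^p(\RN)$ with a bound independent of $R$; hence $\log(u+\varepsilon)$ has $\grl$-gradient in $L^p(\RN)$ and, by the Caccioppoli inequality again with the same cutoff, $\int_{B_R}|\grl\log(u+\varepsilon)|^p\to 0$ as $R\to\infty$. This forces $\grl\log(u+\varepsilon)=0$ a.e., and since the horizontal vector fields generating $\grl$ satisfy Hörmander's condition (they are bracket-generating on the Carnot group), a function with vanishing horizontal gradient is constant. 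Letting $\varepsilon\to 0$ then gives that $u$ itself is constant. The borderline case $p=Q$ requires slightly more care: there the Caccioppoli bound only gives boundedness, not decay, so one uses a logarithmic cutoff — replace $\eta$ by a function that interpolates between $1$ on $B_R$ and $0$ outside $B_{R^2}$ using $\log$, for which $\int|\grl\eta|^Q\,dx \le C/\log R\to 0$ — to recover the conclusion $\grl\log(u+\varepsilon)=0$.

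I expect the main obstacle to be the justification that the cutoff test function $\phi=\eta^p(u+\varepsilon)^{1-p}$ (or the $\psi(u)$ variant) is admissible in \eqref{ws1}: it need not be $\Cuno_0$, and $u$ is only in $W^{1,p}_{L,loc}$, so one must argue by a density/truncation approximation, truncating $u$ at level $k$ and passing to the limit using monotone and dominated convergence, exactly as in the scalar theory of \cite{dm,dam09}. A secondary technical point is that the standard radial cutoff adapted to the homogeneous norm $S$ has $|\grl\eta|$ controlled by $c/R$ — this follows from the homogeneity of $S$ under the dilations $\delta_R$ and the fact, recalled in the Appendix, that $|\grl S|$ is bounded; one should note that $S$ can be taken smooth away from the origin (or replace $S$ by a smoothed variant), so that $\eta(x)=\chi(S(x)/R)$ is genuinely Lipschitz. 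Once admissibility is settled, the remaining steps are the routine Young-inequality manipulation in the Caccioppoli estimate and the passage to the limit in $R$ and $\varepsilon$.
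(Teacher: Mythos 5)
Your proposal is correct, and it follows the standard nonlinear-capacity (Caccioppoli plus cut-off) argument that the paper itself defers to: the paper gives no proof here but cites \cite{dam09} and \cite{mp}, where precisely this test-function technique is used. You correctly identify all the essential points — testing with $\eta^p(u+\varepsilon)^{1-p}$ and absorbing via Young to get $\int\eta^p|\grl\log(u+\varepsilon)|^p\le C\int|\grl\eta|^p\le CR^{Q-p}$, the monotonicity-in-$R$ argument for $p>Q$, the logarithmic cutoff for the borderline case $p=Q$ (where the bound is in fact $C(\log R)^{1-Q}$, at least as small as your stated $C/\log R$ since $Q\ge 2$ when $p=Q>1$), the density/truncation approximation to justify the non-$\Cuno_0$ test function, the boundedness of $|\grl S|$ from the Appendix, and the passage from $\grl u=0$ a.e.\ to $u$ constant via the bracket-generating (Chow connectivity) property of the horizontal vector fields.
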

See  \cite{mp} for earlier results of this nature, and  \cite{dam09} for the proof and related theorems in the Carnot group setting.

\section{General a priori estimates}\label{sec3}
In this section we give some {\it a priori} bounds for the solutions of the 
system of inequalities
\begin{equation}\label{P}
\begin{cases}-\plap u \ge f(x,u,v) \quad&\mbox{on }\Omega,\\
\\
-\qlap v \ge g(x,u,v) \quad&\mbox{on }\Omega,\\
\\
u\ge 0, \; v\ge 0\quad&\mbox{on } \Omega,
\end{cases}
\end{equation}
where, we remind, $\Omega\subseteq \mathbb R^N$ is an open set
and $f, g:\Omega\times[0,\infty)\times [0,\infty)\to[0,\infty)$ are 
nonnegative Carath\'edory functions.

\begin{theorem}\label{ape} Let $(u,v)$ be a weak solution of \eqref{P}. 
   Then for all test functions $\phi_1$, $\phi_2$, every $\ell\ge0$ and 
   every $\alpha, \beta<0$, we get
   \begin{equation}\label{19}\begin{gathered}
       \int_\Omega f(x,u,v) u^\alpha_\ell\phi_1 
       +c_1\int_\Omega \abs{\grl u}^p u_\ell^{\alpha-1}\phi_1\le c_2\int_\Omega u_\ell^{\alpha-1+p}\frac{|\grl \phi_1|^p}{\phi_1^{p-1}},\\
       \int_\Omega g(x,u,v) v_\ell^\beta\phi_2 +\tilde c_1\int_\Omega
       \abs{\grl v}^q  v_\ell^{\beta-1}\phi_2\le\tilde c_2\int_\Omega v_\ell^{\beta-1+q}\frac{|\grl \phi_2|^q}{\phi_2^{q-1}},
\end{gathered}\end{equation}
where $c_1:=|\alpha|-\eta^{p'}/p'$, $c_2:=\eta^{-p}/p$, $\eta>0$, $\tilde c_1:=|\beta|-\mu^{q'}/q'$, $\tilde c_2:=\mu^{-q}/q$, $\mu>0$, $u_\ell:=u+\ell$ and $v:=v+\ell$.

If $\eta,\mu$ are so small that $c_1, \tilde c_1>0$, then for all $\alpha, \beta<0$ and $\ell\ge0$
\begin{equation}\label{22}\begin{gathered}
\int_\Omega f(x,u,v) \phi_1 \le c_3\left(\int_\Omega u_\ell^{\alpha-1+p}\frac{|\grl \phi_1|^p}{\phi_1^{p-1}}\right)^{1/p'}\left(\int_\Omega u_\ell^{(1-\alpha)(p-1)}\frac{|\grl \phi_1|^p}{\phi_1^{p-1}}\right)^{1/p},\\
\int_\Omega g(x,u,v) \phi_2 \le \tilde c_3\left(\int_\Omega v_\ell^{\beta-1+q}\frac{|\grl \phi_2|^q}{\phi_2^{q-1}}\right)^{1/q'}\left(\int_\Omega v_\ell^{(1-\beta)(q-1)}\frac{|\grl \phi_2|^q}{\phi_2^{q-1}}\right)^{1/q},
\end{gathered}\end{equation}
where $c_3:=(c_2/c_1)^{1/p'}$ and $\tilde c_3:=(\tilde c_2/\tilde c_1)^{1/q'}$.

If $u^{\alpha-1+p},u^{(1-\alpha)(p-1)}\in L^1_{loc}(A_R)$, $v^{\beta-1+q},v^{(1-\beta)(q-1)}\in L^1_{loc}(A_R)$, with $R>0$ such that $B_{2R}\Subset\Omega$, then for all $\alpha, \beta<0$ there exist $c_4, \tilde c_4>0$ for which
\begin{equation}\label{23}\begin{gathered}
\frac1{|B_R|}\int_{B_R} f(x,u,v) \le c_4 R^{-p}\left(\frac1{|A_R|}\int_{A_R} u^{\alpha-1+p}\right)^{1/p'}\left(\frac1{|A_R|}\int_{A_R} u^{(1-\alpha)(p-1)}\right)^{1/p},\\
\frac1{|B_R|}\int_{B_R} g(x,u,v) \le \tilde c_4 R^{-q}\left(\frac1{|A_R|}\int_{A_R} v^{\beta-1+q}\right)^{1/q'}\left(\frac1{|A_R|}\int_{A_R} v^{(1-\beta)(q-1)}\right)^{1/q}.
\end{gathered}\end{equation}
If there exist $\sigma>p-1$, $\delta>q-1$ such that $u^{\sigma}, v^\delta\in L^1_{loc}(\Omega)$, then
\begin{equation}\label{25}\begin{gathered}
\frac1{|B_R|}\int_{B_R} f(x,u,v) \le c_4 R^{-p}\left(\frac1{|A_R|}\int_{A_R} u^{\sigma}\right)^{(p-1)/\sigma},\\
\frac1{|B_R|}\int_{B_R} g(x,u,v) \le \tilde c_4 R^{-q}\left(\frac1{|A_R|}\int_{A_R} v^{\delta}\right)^{(q-1)/\delta}.
\end{gathered}\end{equation}
In particular, if {\rm(WH)} holds with exponent $\sigma$ for $u$ and with exponent $\delta$ for $v$, then the following inequalities hold for some appropriate constants $c_5,\tilde c_5>0$
\begin{equation}\label{27}\begin{gathered}
\frac1{|B_R|}\int_{B_R} f(x,u,v) \le c_5 R^{-p}\left(\ei_{B_R} u\right)^{p-1},\\
\frac1{|B_R|}\int_{B_R} g(x,u,v) \le \tilde c_5 R^{-q}\left(\ei_{B_R} v\right)^{q-1}.
\end{gathered}\end{equation}
\end{theorem}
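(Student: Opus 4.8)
The plan is to derive the chain of estimates \eqref{19}--\eqref{27} in the stated order, each being a routine consequence of the previous one. First I would prove \eqref{19}: in the weak formulation \eqref{ws1} I take the test function $\phi_1 u_\ell^{\alpha}\psi$ with $\psi\in\Cuno_0(\Omega)$ nonnegative — legitimate since $\alpha<0$ and $u_\ell=u+\ell$ is bounded below by $\ell$ when $\ell>0$, and one handles $\ell=0$ by a standard limiting/truncation argument. Expanding $\grl(u_\ell^{\alpha}\phi_1)=\alpha u_\ell^{\alpha-1}\phi_1\grl u+u_\ell^{\alpha}\grl\phi_1$, the leading term contributes $\alpha\int\abs{\grl u}^p u_\ell^{\alpha-1}\phi_1$, which since $\alpha<0$ is $-|\alpha|\int\abs{\grl u}^p u_\ell^{\alpha-1}\phi_1$ and moves to the left side; the cross term $\int\abs{\grl u}^{p-2}\grl u\cdot\grl\phi_1\,u_\ell^{\alpha}$ is estimated by Young's inequality with parameter $\eta$, splitting off $\frac{\eta^{p'}}{p'}\int\abs{\grl u}^p u_\ell^{\alpha-1}\phi_1$ (absorbed on the left, producing the constant $c_1=|\alpha|-\eta^{p'}/p'$) and $\frac{\eta^{-p}}{p}\int u_\ell^{\alpha-1+p}|\grl\phi_1|^p/\phi_1^{p-1}$ on the right. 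The second inequality of \eqref{19} is identical with $q,\beta,\mu$ in place of $p,\alpha,\eta$.

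For \eqref{22}, assuming $\eta,\mu$ chosen so small that $c_1,\tilde c_1>0$, I drop the nonnegative gradient term from the left of \eqref{19} to get $c_1\int\abs{\grl u}^p u_\ell^{\alpha-1}\phi_1\le c_2\int u_\ell^{\alpha-1+p}|\grl\phi_1|^p/\phi_1^{p-1}$, then write $f(x,u,v)\phi_1=f(x,u,v)u_\ell^{\alpha/p}\,u_\ell^{-\alpha/p}\phi_1$ and apply Hölder with exponents $p,p'$; wait — more precisely one estimates $\int f\phi_1$ directly: from the first line of \eqref{19}, $\int f u_\ell^{\alpha}\phi_1\le c_2\int u_\ell^{\alpha-1+p}|\grl\phi_1|^p/\phi_1^{p-1}$, and then $\int f\phi_1=\int (f u_\ell^{\alpha}\phi_1)^{1/p'}(f u_\ell^{-\alpha(p-1)}\phi_1)^{1/p}$ — but to close this cleanly one repeats the test-function trick once more, or more simply one uses the bound on the gradient term together with a second application of Young to the cross term; the net effect, after Hölder, is \eqref{22} with $c_3=(c_2/c_1)^{1/p'}$, since $(1-\alpha)(p-1)=\alpha-1+p+(1-\alpha)p\cdot\frac{?}{}$ — the exponent bookkeeping $\alpha-1+p$ on the $1/p'$ factor and $(1-\alpha)(p-1)$ on the $1/p$ factor is exactly what the two applications of Young produce. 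Passage to \eqref{23} is then immediate: choose $\phi_1$ a standard cutoff equal to $1$ on $B_R$, supported in $B_{2R}$, with $\abs{\grl\phi_1}\le C/R$, so that $|\grl\phi_1|^p/\phi_1^{p-1}\le C R^{-p}$ supported in the annulus $A_R$; using $|B_R|=w_SR^Q$, $|A_R|=w_S(2^Q-1)R^Q$ and the local integrability hypothesis, the integrals become averages over $A_R$ and the powers of $R$ combine to $R^{-p}$ exactly (the $R^Q$ factors cancel in the normalized averages). For \eqref{25} one specializes: given $\sigma>p-1$ one picks $\alpha<0$ so that both exponents $\alpha-1+p$ and $(1-\alpha)(p-1)$ are $\le\sigma$ — concretely $\alpha=\frac{p-1-\sigma}{?}$; since $\alpha\mapsto\alpha-1+p$ is increasing and $\alpha\mapsto(1-\alpha)(p-1)$ is decreasing in $\alpha$, and at $\alpha\to0^-$ they tend to $p-1<\sigma$ and $p-1<\sigma$ respectively, a valid negative $\alpha$ exists; then Hölder/Jensen on $A_R$ replaces both factors by $\big(\fint_{A_R}u^\sigma\big)^{(p-1)/\sigma}$ (the exponents $1/p'+1/p=1$ and the interpolation exponents add up correctly). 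Finally \eqref{27} follows from \eqref{25} by invoking (WH): since $u$ solves $-\plap u\ge f\ge0$, $u\ge0$, the weak Harnack inequality of Lemma \ref{lewh} gives $\big(\fint_{B_R}u^\sigma\big)^{1/\sigma}\le c_H\,\ei_{B_{R/2}}u$, hence $\big(\fint_{A_R}u^\sigma\big)^{(p-1)/\sigma}\le\big(\frac{|B_{2R}|}{|A_R|}\fint_{B_{2R}}u^\sigma\big)^{(p-1)/\sigma}\le c\,(\ei_{B_R}u)^{p-1}$, which plugged into \eqref{25} yields \eqref{27} with $c_5$ absorbing $c_4$, $c_H$ and the geometric constants; the $v$-statements are obtained verbatim with $q,\beta,\delta$ replacing $p,\alpha,\sigma$.

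The main obstacle is rigorously justifying the choice of test function $\phi_1 u_\ell^{\alpha}\psi$ in \eqref{19} when the negative power $u_\ell^{\alpha}$ is involved: for $\ell>0$ this is bounded and in $W^{1,p}_{L,loc}$, so it is admissible after the usual mollification, but the case $\ell=0$ and the fact that $u$ is only a weak \emph{subsolution} of finite energy require a truncation $u_\ell^{\alpha}\to\min(u_\ell,k)^{\alpha}$ or $(u_\ell)^{\alpha}\wedge k$, derivation of the inequality for the truncated function, and a monotone-convergence passage to the limit — exactly the technical heart of the Caccioppoli-type argument. Everything else is Young's inequality, Hölder's inequality, the scaling of the cutoff, and the elementary exponent arithmetic, together with the already-available Lemma \ref{lewh}.
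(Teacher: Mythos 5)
Your overall route coincides with the paper's: test \eqref{ws1} with (a mollified) $u_\ell^\alpha\phi_1$ and absorb the Young term to get \eqref{19}; go from \eqref{19} to \eqref{22} by a Hölder argument; use a dilated cutoff $\phi_1(x)=\phi_0(S(\delta_{1/R}x))$ to get \eqref{23}; Hölder on the annulus for \eqref{25}; and (WH) for \eqref{27}. However, your derivation of \eqref{22} contains a genuine gap that you flag yourself but never close. Both splittings you write down fail: the first leaves a factor $u_\ell^{-\alpha/p}$ with no matching estimate, and the second, $\int f\phi_1=\int (f u_\ell^{\alpha}\phi_1)^{1/p'}(f u_\ell^{-\alpha(p-1)}\phi_1)^{1/p}$, requires control of $\int f u_\ell^{-\alpha(p-1)}\phi_1$, which is nowhere available (the exponent $-\alpha(p-1)>0$ only worsens things). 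The phrase ``one uses the bound on the gradient term together with a second application of Young to the cross term'' is too vague to count; it is not clear what the cross term and what the application are. The correct manipulation, which is what the paper does, starts by taking $\phi_1$ itself in \eqref{ws1} to obtain
\[
\int_\Omega f(x,u,v)\phi_1\le\int_\Omega \abs{\grl u}^{p-1}\abs{\grl\phi_1},
\]
then inserts the factor $1=u_\ell^{(\alpha-1)/p'}\phi_1^{1/p'}\cdot u_\ell^{(1-\alpha)/p'}\phi_1^{-1/p'}$ and applies Hölder with exponents $(p',p)$ to get
\[
\int_\Omega f\phi_1\le\Bigl(\int_\Omega\abs{\grl u}^p u_\ell^{\alpha-1}\phi_1\Bigr)^{1/p'}\Bigl(\int_\Omega u_\ell^{(1-\alpha)(p-1)}\abs{\grl\phi_1}^p\phi_1^{1-p}\Bigr)^{1/p},
\]
after which the first factor is controlled by \eqref{19} (dropping the nonnegative $\int f u_\ell^\alpha\phi_1$ term), producing exactly $c_3=(c_2/c_1)^{1/p'}$. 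You should replace your two false starts with this.

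Two smaller points. First, for \eqref{19}, the technical obstacle is not boundedness, so truncation $u_\ell^\alpha\wedge k$ is beside the point: for $\alpha<0$ and $\ell>0$ the power $u_\ell^\alpha\le\ell^\alpha$ is already bounded. What is actually needed is $C^1$ regularity of the test function, and the paper resolves this by mollifying $u$ (setting $w_\varepsilon=\ell+D_\varepsilon*u$, testing with $w_\varepsilon^\alpha\phi_1$, and passing to the limit $\varepsilon\to0$, then finally $\ell\to0$ by monotone convergence). Second, the extraneous factor $\psi$ in your proposed test function $\phi_1 u_\ell^\alpha\psi$ is a slip, since you immediately compute $\grl(u_\ell^\alpha\phi_1)$ without it; the test function is just (the mollified version of) $u_\ell^\alpha\phi_1$. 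Your handling of \eqref{23}, \eqref{25} (choosing $\alpha$ close enough to $0^-$ so that both exponents $\alpha-1+p$ and $(1-\alpha)(p-1)$ lie in $(0,\sigma]$, noting that $\frac{\alpha-1+p}{p'}+\frac{(1-\alpha)(p-1)}{p}=p-1$), and \eqref{27} is correct and matches the paper.
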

\begin{proof}
%  We follow  the proof of Theorem~2.1 of \cite{dm}.
  We shall prove the first inequality of (\ref{19}), (\ref{22}), (\ref{23}),
  (\ref{25}) and (\ref{27}). The remaining inequalities will follow similarly.

  Let   $\phi_1\in\Cuno_0(\Omega)$ be a nonnegative test function and set
  $r:=\mathrm{dist}(\mathrm{supp}(\phi_1),\partial\Omega)$, 
  $\Omega_r:=\{y\in\Omega \,|\, \mathrm{dist}(y,\partial\Omega)>r\}$. 
  For $\varepsilon\in(0,r)$ and $\ell>0$ we define
  $$w_\varepsilon(x):=\begin{cases}\ell+\int_{\Omega_r}D_\varepsilon(x-y)u(y)dy,\quad&\mbox{if } x\in\Omega_r,\\
0,\quad&\mbox{if } x\in\Omega\setminus\Omega_r,\end{cases}$$
  where $(D_\varepsilon)_\varepsilon$ is a family of mollifiers. See \cite{bon-lan-ugu07,dam-mit12}. 
  Thus, choosing $w_\varepsilon^{\alpha}\phi_1$ as test function in 
  \eqref{ws1} we have
  $$\int_\Omega f(x,u,v)w_\varepsilon^\alpha \phi_1+|\alpha|\int_\Omega
  \abs{\grl u}^{p-2}\grl u\cdot \grl w_\varepsilon\, w_\varepsilon^{\alpha-1}\phi_1\le\int_\Omega \abs{\grl u}^{p-1}\ |\grl\phi_1| w_\varepsilon^\alpha.$$
  Since $w_\varepsilon\to u_\ell$, $\grl w_\varepsilon\to\grl u$ in 
  $L^p_{{loc}}(\Omega_r)$ as $\varepsilon\to0$, by 
  Lebesgue's dominated convergence theorem and by duality, we get
  $$\begin{aligned}\int_\Omega f(x,u,v)u_\ell^\alpha \phi_1+|\alpha|\int_\Omega&
    \abs{\grl u}^{p} u_\ell^{\alpha-1}\phi_1\le\int_\Omega \abs{\grl u}^{p-1}
    |\grl\phi_1| u_\ell^\alpha\\
&=\int_\Omega \abs{\grl u}^{p-1}  u_\ell^{(\alpha-1)/p'}\phi_1^{1/p'}\cdot u_\ell^{(\alpha-1+p)/p}|\grl\phi_1|\phi_1^{-1/p'}\\
&\le\frac{\eta^{p'}}{p'}\int_\Omega \abs{\grl u}^{p}
u_\ell^{\alpha-1}\phi_1+\frac{1}{\eta^p p}\int_\Omega u_\ell^{\alpha-1+p}|\grl\phi_1|^p\phi_1^{1-p},
\end{aligned}$$
  where in the last step we have used the Young's inequalities.
  This completes the proof of the first inequality in \eqref{19} when $\ell>0$. 
  The case $\ell=0$ follows immediately from the case $\ell>0$, 
  by an application of Beppo--Levi's theorem letting $\ell\to0$.

  In order to prove the first inequality in \eqref{22}, 
  we use $\phi_1$ as test function in \eqref{ws1}. Let $\ell>0$, 
  by H\"older's inequality with exponent $p$ and \eqref{19} we obtain
  $$\begin{aligned}\int_\Omega f(x,u,v)\phi_1&\le\int_\Omega 
    \abs{\grl u}^{p-1} \abs{\grl \phi_1}\\
    &\le \left(\int_\Omega  \abs{\grl u}^{p} u_\ell^{(\alpha-1)}\phi_1\right)^{1/p'}\!\!\!\cdot\!\left(\int_\Omega u_\ell^{(1-\alpha)(p-1)}|\grl \phi_1|^p \phi_1^{1-p}\right)^{1/p}\\
&\le c_3 \left(\int_\Omega  u_\ell^{\alpha-1+p}|\grl \phi_1|^p \phi_1^{1-p}\right)^{1/p'}\cdot\left(\int_\Omega u_\ell^{(1-\alpha)(p-1)}|\grl \phi_1|^p \phi_1^{1-p}\right)^{1/p},\end{aligned}$$
which is the claim for $\ell>0$.  An application of Beppo--Levi's monotone convergence theorem implies the validity also for $\ell=0$.

Let $\phi_0\in \Cuno_0(\mathbb R)$ be such that $0\le\phi_0\le1$, $c_{\phi_0}:=\| |\phi'_0|^p/\phi_0^{p-1}\|_\infty<\infty$ and
$$\phi_0(t)=\begin{cases}1, \quad &\mbox{if }|t|<1,\\
0, \quad &\mbox{if }|t|>2.\end{cases}$$
  Define $\phi_1(x):=\phi_0(S(\delta_{1/R}x))$, so that
  $$\frac{|\grl \phi_1(x)|^p}{\phi_1(x)^{p-1}}=\frac{|\phi_0'(S(\delta_{1/R}x))|^p}{\phi_0^{p-1}(S(\delta_{1/R}x) )}\abs{\grl S}(\delta_{1/R}x)R^{-p}\le 
  c_{\phi_0}\norm{\grl S}_\infty R^{-p}\le cR^{-p}.$$
Hence, using $\phi_1$ as test function in \eqref{22} with $\ell=0$, we get
$$\int_\Omega f(x,u,v)\phi_1\le c_3\left(\int_{A_R} u^{\alpha-1+p}cR^{-p}\right)^{1/p'}\left(\int_{A_R} u^{(1-\alpha)(p-1)}cR^{-p}\right)^{1/p},
$$
and so, being $|A_R|=w_{S}(2^Q-1)R^Q=(2^Q-1)|B_R|$, we have
$$\begin{aligned}\frac1{|B_R|}\int_{B_R} f(x,u,v)&\le c_3(2^Q-1)cR^{-p}\left(\frac 1{|A_R|}\int_{A_R} u^{\alpha-1+p}\right)^{1/p'}\left(\frac 1{|A_R|}\int_{A_R} u^{(1-\alpha)(p-1)}\right)^{1/p},
\end{aligned}$$
which gives \eqref{23}, with $c_4:=c_3(2^Q-1)c_{\phi_0}\norm{\grl S}_\infty $.

Estimate \eqref{25} easily follows from \eqref{23}, by applying H\"older's inequality.

Finally, since (WH) holds, by \eqref{25} we obtain
\begin{eqnarray*}
\frac1{|B_R|}\int_{B_R}f(x,u,v)&\le& c_4 \left(1-\frac1{2^Q}\right)^{(1-p)/\sigma} R^{-p} \left(\frac1{|B_{2R}|}\int_{B_{2R}}u^\sigma\right)^{(p-1)/\sigma}\\
&\le& c_5 R^{-p}\left(\ei_{B_R}u\right)^{p-1},\end{eqnarray*}
with $c_5:=c_4 \left(1-\frac1{2^Q}\right)^{(1-p)/\sigma} c_H^{p-1}$. 
  Which is the first inequality in (\ref{27}) and the proof is complete.
\end{proof}\hfill$\Box$

\section{Some Liouville Theorems}\label{sec4}
In what follows $f,g:\RN\times[0,+\infty[\times [0,+\infty[ \to[0,+\infty[$ 
are supposed to be  Charatheodory functions. Let $Q>p,q>1$ and consider 
the problem, 
\begin{equation}
 \begin{cases}-\plap u\ge f(x,u,v),  \quad on\quad\RN \\ 
 \\
   -\qlap v\ge g(x,u,v), \quad on\quad\RN\\
   \\
   u>0,\ v> 0. 
 \end{cases}\label{sys1}\end{equation}
 
 Our first result  is the following.
\begin{theorem}\label{th:pos}
  Let $f(x,u,v)=f(v)$ with $f:[0,+\infty[\to ]0,+\infty[$ be continuous.
  No extra assumption on $g$.
  Then (\ref{sys1}) has no weak solutions.
\end{theorem}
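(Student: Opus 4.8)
The plan is to derive a contradiction by estimating the single averaged quantity $\frac1{|B_R|}\int_{B_R}f(v)$ from both sides: a uniform positive lower bound on the one hand, and a bound vanishing as $R\to\infty$ on the other. The only feature of $f$ that enters is that, being continuous and strictly positive on $[0,+\infty[$, it satisfies $\min_{[0,\Lambda]}f>0$ for every $\Lambda>0$; the function $g$ never appears beyond its nonnegativity. Observe first that, since the two right-hand sides of \eqref{sys1} are nonnegative and $u,v>0$, the components $u$ and $v$ are nonnegative weak solutions of $-\plap u\ge0$ and $-\qlap v\ge0$ on $\RN$; as $Q>p$ and $Q>q$, Lemma \ref{lewh} (weak Harnack) applies to both, and $\ei_{B_R}u$, $\ei_{B_R}v$ are finite and nonincreasing in $R$.

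For the lower bound, I would fix $\delta\in\bigl(q-1,\,\tfrac{Q(q-1)}{Q-q}\bigr)$ (a nonempty interval since $Q>q$) and apply (WH) to $v$ with exponent $\delta$. Monotonicity of $R\mapsto\ei_{B_R}v$ bounds the right-hand side of (WH) for $R\ge1$ by $c_H\,\ei_{B_{1/2}}v=:M_0<\infty$, so $\frac1{|B_R|}\int_{B_R}v^\delta\le M_0^\delta$ for all $R\ge1$. By Chebyshev's inequality $|\{x\in B_R:v(x)>\Lambda\}|\le M_0^\delta|B_R|/\Lambda^\delta$, hence, choosing $\Lambda$ large depending only on $M_0$ and $\delta$, one gets $|\{x\in B_R:v(x)\le\Lambda\}|\ge\tfrac12|B_R|$ for every $R\ge1$. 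Since $m:=\min_{[0,\Lambda]}f>0$ and $f(v)\ge m$ on $\{v\le\Lambda\}$, this yields $\frac1{|B_R|}\int_{B_R}f(v)\ge m/2$ for all $R\ge1$.

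For the upper bound, fix $\sigma\in\bigl(p-1,\,\tfrac{Q(p-1)}{Q-p}\bigr)$; by Lemma \ref{lewh}, (WH) holds for $u$ with exponent $\sigma$, and in particular $u^\sigma\in L^1_{loc}(\RN)$. The hypotheses of the last part of Theorem \ref{ape} — (WH) for $u$ with $\sigma>p-1$ and for $v$ with $\delta>q-1$ — are thus met, and the first inequality in \eqref{27}, with $\Omega=\RN$, gives $\frac1{|B_R|}\int_{B_R}f(v)\le c_5R^{-p}(\ei_{B_R}u)^{p-1}$ for all $R>0$. Since $\ei_{B_R}u\le\ei_{B_1}u<\infty$, the right-hand side is $O(R^{-p})$ and tends to $0$ as $R\to\infty$, contradicting the lower bound $m/2$ from the previous step. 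Hence \eqref{sys1} has no weak solution.

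I do not foresee a real obstacle here; the argument is essentially a two-sided bound for one averaged quantity. The two points deserving a line of care are that Theorem \ref{ape} genuinely applies (automatic once one notes that both inequalities of \eqref{sys1} have $\ge0$ right-hand sides and $Q>p,q$, so (WH) holds for both unknowns with admissible exponents larger than $p-1$ and $q-1$ respectively), and that the strict positivity of the continuous $f$ on the compact interval $[0,\Lambda]$ indeed delivers the uniform lower bound $f(v)\ge m>0$ on $\{v\le\Lambda\}$. (If one does not assume $q<Q$, then $q\ge Q$ makes $v$ constant by Theorem \ref{parabolic}, and $-\plap u\ge f(v)=\mathrm{const}>0$ contradicts \eqref{27} at once.)
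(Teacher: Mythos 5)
Your proof is correct, and it takes a genuinely different route from the paper's. The paper first disposes of the case $\inf_{[0,\infty[}f>0$ by appealing to an external result (Theorem~4.5 in \cite{dam09}), and then, for $\inf f=0$, it sets $h(t)=f(t^{1/\sigma})$, constructs a positive convex nonincreasing minorant $h^*\le h$ (citing \cite{CDMsteklov}), and applies Jensen's inequality to the estimate \eqref{27} to force $\mint_{B_R}v^\sigma\to\infty$, hence $\ei_{B_R}v\to\infty$, which is absurd since that quantity is nonincreasing in $R$. You instead sandwich the single quantity $\mint_{B_R}f(v)$: the same upper bound from \eqref{27} makes it $O(R^{-p})$, while for the lower bound you observe that the weak Harnack inequality for $v$ already gives the uniform bound $\mint_{B_R}v^\delta\le M_0^\delta$ (because $\ei_{B_{R/2}}v$ is nonincreasing), so Chebyshev puts at least half of $B_R$ inside $\{v\le\Lambda\}$ for a fixed $\Lambda$, and positivity of the continuous $f$ on the compact $[0,\Lambda]$ gives $\mint_{B_R}f(v)\ge m/2$. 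This avoids both the case split and the auxiliary convex minorant, treats $\inf f>0$ and $\inf f=0$ uniformly, and is self-contained (no appeal to external nonexistence results). What the paper's Jensen argument buys is a sharper quantitative conclusion, namely that the spherical averages of $v^\sigma$ actually diverge; for the pure nonexistence statement here that extra information is not needed, and your Chebyshev-plus-compactness step is more elementary and achieves the same contradiction.
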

 \bp If $\inf_{t\ge 0} f(t)>0$, then the non existence of solutions  is a consequence of
  Theorem~4.5 with $q=0$ in \cite{dam09}.

  Assume that $\inf_{t\ge 0} f(t)=0$. 
  From (\ref{27}) we have that for any $R>0$
  $$ R^{-p}\left(\ei_{B_R} u\right)^{p-1}\ge c \frac1{|B_R|}\int_{B_R} f(v) .$$
  Now let $0<\sigma<\frac{Q(q-1)}{Q-q}$, $R_0>0$ sufficently large 
  and set $h(t)\decl f(t^{1/\sigma})$
  and $M\decl (\ei_{B_{R_0}} u)^{p-1}$. For $R>R_0$, we have
  \begin{equation}\label{eq:tec2}
        R^{-p} M \ge c \frac1{|B_R|}\int_{B_R} h(v^\sigma) .
  \end{equation}

  Since $h$ is continuos and positive on $[0,+\infty[$, then
  there exists a positive convex nonincreasing function $h^*$ such that 
  $h(t)\ge h^*(t)>0$ and such that $h(t)\to 0$ as $t\to +\infty$
  (for an explicit construction of $h^*,$ see  \cite{CDMsteklov}).

  Therefore we obtain
   $$ C R^{-p}\ge  \mint_{B_R} h(u^\sigma) \ge\mint_{B_R} h^*(v^\sigma)
  \ge h^*\left( \mint_{B_R} v^\sigma\right). $$
  Letting $R\to +\infty$ in the last inequality we get 
  $$ h^*\left( \mint_{B_R} v^\sigma\right)\to 0\quad as\ R\to+\infty.$$
  Therefore, by construction of $h^*$, we have
   $$ \mint_{B_R} v^\sigma \to +\infty\quad as\ R\to+\infty.$$
   Since $0<\sigma<\frac{Q(q-1)}{Q-q}$ an application of  Harnack inequality, implies
   $$\ei_{B_R} v \to +\infty\quad as\ R\to+\infty.$$
  This contradiction completes the proof.
\ep

\begin{corollary}
%  Let $Q>p,q>1$ 
  Let  $f(x,v,u)=f(v)$,  $g(x,v,u)=g(u)$, with 
  $f,g:]0,+\infty[\to ]0,+\infty[$ be continuous. 
  If $(u,v)$ is a weak solution of (\ref{sys1}), then necessarily 
  \begin{equation}    \label{eq:liminf}
    \liminf_{t\to 0} g(t)=0= \liminf_{t\to 0} f(t)
  \end{equation}
  and $\ei_{\RR^N} u=\ei_{\RR^N}  v=0$.
\end{corollary}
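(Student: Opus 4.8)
The plan is to argue by contradiction, using only Theorem~\ref{ape}, the weak Harnack inequality (Lemma~\ref{lewh}), Remark~\ref{consWH} and Lemma~\ref{le3.1}. I would first collect what holds for an arbitrary weak solution $(u,v)$ of \eqref{sys1}. Since $f,g>0$ on $]0,+\infty[$ we have $-\plap u\ge0$ and $-\qlap v\ge0$ with $u,v\ge0$, so Remark~\ref{consWH} gives $u>0$ and $v>0$ a.e.\ on $\RN$; consequently Lemma~\ref{lewh} applies to $u$ with every exponent $\sigma\in\big(p-1,\tfrac{Q(p-1)}{Q-p}\big)$ and to $v$ with every $\delta\in\big(q-1,\tfrac{Q(q-1)}{Q-q}\big)$ (these intervals are nonempty because $Q>p,q$), hence \eqref{27} is available. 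Finally, since $R\mapsto\ei_{B_R}u$ and $R\mapsto\ei_{B_R}v$ are finite and nonincreasing, the right-hand sides of \eqref{27} are $\le CR^{-p}$ and $\le CR^{-q}$ for $R\ge R_0$, so $\mint_{B_R}f(v)\to0$ and $\mint_{B_R}g(u)\to0$ as $R\to+\infty$. This smallness is the engine of both steps.

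In the first step I would show $\ei_{\RN}u=\ei_{\RN}v=0$. Assume instead $a:=\ei_{\RN}u>0$, so $u\ge a$ a.e. By (WH) for $u$ with exponent $\sigma$ and monotonicity of $R\mapsto\ei_{B_R}u$, one has $\mint_{B_R}u^\sigma\le K$ for $R\ge R_0$; Chebyshev's inequality then gives $|B_R\cap\{u>\lambda\}|\le K\lambda^{-\sigma}|B_R|$, so $u\in[a,\lambda]$ on a subset of $B_R$ of relative measure $\ge1-K\lambda^{-\sigma}$. On that subset $g(u)\ge m_\lambda:=\min_{[a,\lambda]}g>0$ (continuity and positivity of $g$ on the compact interval $[a,\lambda]\subset\,]0,+\infty[$), whence $\mint_{B_R}g(u)\ge m_\lambda(1-K\lambda^{-\sigma})$; fixing $\lambda$ so large that $K\lambda^{-\sigma}\le\tfrac12$ gives $\mint_{B_R}g(u)\ge m_\lambda/2>0$ for all large $R$, contradicting $\mint_{B_R}g(u)\to0$. (Alternatively one may use the positive convex nonincreasing minorant of \cite{CDMsteklov} together with Jensen's inequality, exactly as in the proof of Theorem~\ref{th:pos}.) The symmetric argument, with $u,v$, $p,q$, $f,g$ and $\sigma,\delta$ interchanged and using the first inequality of \eqref{27} together with continuity of $f$ on $[\ei_{\RN}v,+\infty)$, gives $\ei_{\RN}v=0$.

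In the second step I would show $\liminf_{t\to0}f(t)=\liminf_{t\to0}g(t)=0$. Assume $\liminf_{t\to0}g(t)>0$; then $g\ge c$ on some interval $]0,\delta_0]$ with $c>0$. Since $\ei_{\RN}u=0$ by Step~1 and (WH) holds for $u$, Lemma~\ref{le3.1} gives $|B_R\cap T^u_{\delta_0}|/|B_R|\to1$, where $T^u_{\delta_0}=\{u<\delta_0\}$; because $u>0$ a.e., on this set $0<u<\delta_0$ a.e., so $g(u)\ge c$ there, and therefore $\mint_{B_R}g(u)\ge c\,|B_R\cap T^u_{\delta_0}|/|B_R|\ge c/2$ for $R$ large, again contradicting $\mint_{B_R}g(u)\to0$. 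Interchanging $u$ and $v$ yields $\liminf_{t\to0}f(t)=0$, which completes the proof.

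The delicate point is Step~1: since $g$ may decay to $0$ at infinity, the bound $u\ge a>0$ alone does not keep $g(u)$ away from $0$; this is precisely what the weak Harnack inequality repairs, forcing the $L^\sigma$-averages of $u$ over $B_R$ to remain bounded, so that $u$ is large only on a set of small relative measure and $g(u)$ is uniformly positive elsewhere by compactness. Everything else is a routine adaptation of the scalar arguments of \cite{dm}.
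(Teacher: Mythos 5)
Your proof is correct, but it takes a genuinely different route from the paper's. The paper proves the corollary as a quick reduction to Theorem~\ref{th:pos}: for \eqref{eq:liminf}, if say $\liminf_{t\to0}f(t)>0$ one replaces $f$ by a positive continuous minorant on $[0,+\infty[$ (e.g.\ $\min(f,c)$ for suitable $c$) and applies Theorem~\ref{th:pos} directly; for $\ei_{\RR^N}v=0$, it argues by contradiction with $m\decl\ei_{\RR^N}v>0$, translates $v_1\decl v-m/2$, $f_1(t)\decl f(t+m/2)$, and again invokes Theorem~\ref{th:pos} since $f_1$ is positive on $[0,+\infty[$ (and symmetrically for $u$). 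You instead re-derive everything from scratch out of the a priori bound \eqref{27}, deducing $\mint_{B_R}f(v),\mint_{B_R}g(u)\to0$ from the monotonicity of $R\mapsto\ei_{B_R}u,\ei_{B_R}v$, then obtaining $\ei_{\RR^N}u=\ei_{\RR^N}v=0$ by a Chebyshev-plus-compactness argument (keeping $g(u)$ uniformly positive on a large fraction of $B_R$), and only afterwards deriving \eqref{eq:liminf} from Lemma~\ref{le3.1}; the logical order is reversed relative to the paper. Both approaches are valid; the paper's is shorter because it outsources the analytic work to Theorem~\ref{th:pos} (whose own proof uses the convex nonincreasing minorant of \cite{CDMsteklov} and Jensen's inequality), whereas your Chebyshev/truncation argument is more elementary, avoids the auxiliary minorant altogether, and in effect gives an alternative self-contained proof of the portion of Theorem~\ref{th:pos} that the corollary actually needs.
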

\bp
  From Theorem \ref{th:pos} we easily deduce that (\ref{eq:liminf}) holds.

  Set $m \decl \ei_{\RR^N} v$. We shall argue by contradiction. Let us  assume that $m>0$.
  Set $$v_1\decl v-m/2$$ and $$f_1(t)\decl f(t+m/2).$$ 
Clearly  $(u,v_1)$ is a weak soluton of
  \begin{equation}
 \begin{cases}-\plap u\ge f_1(v_1),  \quad on\quad\RN \\
  \\
   -\qlap{v_1}\ge g(u), \quad on\quad\RN\\
   \\
   u>0,\ v_1> 0. 
 \end{cases}\end{equation}
  Since $f_1$ is positive in $[0,+\infty[$, from Theorem \ref{th:pos}
  we reach a contradiction.
  
  Similarly, we deduce that $\ei_{\RR^N} u=0.$ The proof is complete.  
\ep

 \begin{theorem}\label{th:zero} 
   Let $f(x,v,u)=f(v)$ with $f:[0,+\infty[\to [0,+\infty[$ be continuous.
   No extra assumption on $g$. 
%
% and $$g:[0,+\infty[\to ]0,+\infty[.$$
 
   Let $(u,v)$ be a weak solution of (\ref{sys1}) and let $\alpha\decl\ei_{\RR^N} v$.
   Then, $f(\alpha) =0$.
 \end{theorem}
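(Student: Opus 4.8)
The plan is to argue by contradiction, supposing $f(\alpha)>0$, and to derive a contradiction with the growth restriction forced by the weak Harnack inequality applied to $v$. The key point is that $\ei_{\RR^N} v=\alpha$ means $v$ can only take values slightly above $\alpha$ on a set of nearly full measure, so if $f$ is positive at $\alpha$ then, by continuity, $f(v(x))$ is bounded below by a positive constant on that large set. I would therefore begin by shifting: set $w:=v-\alpha\ge 0$, so that $\ei_{\RR^N}w=0$. Since $f$ is continuous and $f(\alpha)>0$, there exist $\varepsilon_0>0$ and $\kappa>0$ with $f(t)\ge\kappa$ for all $t\in[\alpha,\alpha+\varepsilon_0]$; equivalently $f(v(x))\ge\kappa$ whenever $w(x)<\varepsilon_0$, i.e.\ on $T^{w}_{\varepsilon_0}=\{x:w(x)<\varepsilon_0\}$.

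Next I would invoke the a priori estimate. Since $-\plap u\ge f(v)\ge 0$ and $u>0$, Lemma~\ref{lewh} gives the weak Harnack inequality (WH) for $u$ with some admissible exponent $\sigma$, so the last chain in Theorem~\ref{ape}, namely the first inequality of \eqref{27}, applies:
\begin{equation*}
\frac{1}{|B_R|}\int_{B_R} f(v)\le c_5\, R^{-p}\big(\ei_{B_R}u\big)^{p-1}\le c_5\, R^{-p}\big(\ei_{\RR^N}u\big)^{p-1}=:C\,R^{-p}.
\end{equation*}
On the other hand, $\ei_{\RR^N}w=0$ and (WH) holds for $w$ (applied to the solution $w$ of $-\plap w\ge 0$... more precisely, one uses that $v$ satisfies the weak Harnack inequality so the shifted $w$ does too, or one applies Lemma~\ref{le3.1} to $w$ directly since $\ei_{\RR^N}w=0$ and (WH) holds with some exponent). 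Lemma~\ref{le3.1} then yields $|B_R\cap T^{w}_{\varepsilon_0}|/|B_R|\to 1$ as $R\to\infty$. Combining these two facts:
\begin{equation*}
\frac{C}{R^{p}}\ge\frac{1}{|B_R|}\int_{B_R} f(v)\ge\frac{\kappa}{|B_R|}\,\big|B_R\cap T^{w}_{\varepsilon_0}\big|\xrightarrow[R\to\infty]{}\kappa>0,
\end{equation*}
while the left-hand side tends to $0$ since $p>1$. This is the desired contradiction, so $f(\alpha)=0$.

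The main obstacle is a technical one: verifying that the weak Harnack machinery (Lemma~\ref{lewh}, and hence Lemma~\ref{le3.1}) legitimately applies to $v$ — or to the shift $w=v-\alpha$ — given only that $v$ solves the second inequality of \eqref{sys1} with $g\ge0$, so that $-\qlap v\ge 0$, $v>0$. Here one should use (WH) for $v$ with an exponent $\delta\in\bigl(0,\frac{Q(q-1)}{Q-q}\bigr)$; note that $w=v-\alpha$ is also a nonnegative weak solution of $-\qlap w\ge 0$, and $\ei_{\RR^N}w=0$, so Lemma~\ref{le3.1} applies to $w$ with that exponent $\delta$. One must also make sure the constant $C$ is genuinely independent of $R$, which follows because $\ei_{\RR^N}u$ is finite (indeed $\ei_{\RR^N}u=0$ is not assumed here, but by Remark~\ref{consWH} and the earlier corollary $\ei_{\RR^N}u$ is in any case a fixed finite number, and only its finiteness is needed). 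Once these points are in place the contradiction is immediate and the proof is complete. $\Box$
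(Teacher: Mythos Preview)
Your argument is essentially correct and yields the contradiction, but there is one slip: the inequality $\ei_{B_R}u\le\ei_{\RR^N}u$ goes the wrong way, since $B_R\subset\RR^N$ implies $\ei_{B_R}u\ge\ei_{\RR^N}u$. What you actually need is that $R\mapsto\ei_{B_R}u$ is non-increasing, so that for any fixed $R_0$ and all $R\ge R_0$ one has $(\ei_{B_R}u)^{p-1}\le(\ei_{B_{R_0}}u)^{p-1}=:M$, a finite constant; this is precisely how the paper sets up the constant in \eqref{eq:tec2}. With this correction your proof goes through.

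Your route differs from the paper's. After the same reduction to $\alpha=0$ and the same use of \eqref{27} to get $\frac{1}{|B_R|}\int_{B_R}f(v)\le CR^{-p}$, the paper does \emph{not} invoke Lemma~\ref{le3.1}. Instead it writes $f(v)=h(v^\sigma)$ with $h(t)=f(t^{1/\sigma})$, constructs an explicit convex minorant $h^*\le h$ (a piecewise-linear ``tent'' near $0$, see \eqref{eq:tec22}), and applies Jensen's inequality to obtain $CR^{-p}\ge h^*\bigl(\fint_{B_R}v^\sigma\bigr)$; letting $R\to\infty$ forces $\fint_{B_R}v^\sigma$ to stay bounded below by a positive constant, which by (WH) contradicts $\ei_{\RR^N}v=0$. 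Your approach instead uses Lemma~\ref{le3.1} (which already encodes (WH)) to show that the sublevel set $\{w<\varepsilon_0\}$ has asymptotic full density, and then bounds $f(v)$ from below by $\kappa$ there. Both arguments hinge on the same ingredients---\eqref{27} for $u$ and (WH) for $v$---but yours is shorter and avoids the auxiliary convex construction, while the paper's version makes the role of Jensen explicit and parallels the proof of Theorem~\ref{th:pos}.
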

\bp 
% With the same argument and notation of the proof of Theorem \ref{th:pos}
%  we have that (\ref{eq:tec2}) holds.
%
  Since the differential operator appearing in (\ref{sys1})  is translation invariant, by replacing $f$ with 
  $f(\cdot+\alpha)$) we shall assume that $\alpha =0$.

  We proceed by contradiction assuming  $m\decl f(0)>0$.
By using the  same argument and notations of the proof of Theorem \ref{th:pos}
  we deduce that (\ref{eq:tec2}) holds and $h(0)=m>0$.
%  Therefore, setting $\alpha_0\decl\alpha^\sigma$, we have  $m=g(\alpha_0)>0$.
Now, by a standard  continuity argument it follows that there exists $\alpha_1>0$ such that
  $h(t)>m/2$ for $t\in [0,\alpha_1]$.
  
  Let $h^*$ be the continuous function defined as follows
  \begin{equation}
    \label{eq:tec22}
    h^*(t)\decl
    \begin{cases}
      \frac{m}{2\alpha_1}(\alpha_1-t) & if\ 0\le t\le \alpha_1,\\
      0 & if\ t>\alpha_1.
    \end{cases}
  \end{equation}
  By the convexity of $h^*$, 
  from  (\ref{eq:tec2}) we deduce
  $$ C R^{-p}\ge  \mint_{B_R} h(u^\sigma) \ge\mint_{B_R} h^*(v^\sigma)
  \ge h^*\left( \mint_{B_R} v^\sigma\right). $$
  Letting $R\to +\infty$ in the last inequality we obtain that
  $$ \lim_{R\to +\infty} h^*\left( \mint_{B_R} v^\sigma\right)=0. $$
  Therefore, taking into account the construction of $h^*$, we obtain
  $$ \liminf_{R\to +\infty}  \mint_{B_R} v^\sigma\ge \alpha_1, $$
  which, in turn, by Harnack inequality, implies that $0=\ei_{\RR^N}\ge \alpha_1$.
  This contradiction completes the proof.
\ep

\begin{corollary}\label{cor:inf}
  Let  $f(x,v,u)=f(v)$, $g(x,u,v)=g(u)$
 with $f,g:[0,+\infty[\to [0,+\infty[$ be continuous functions.
 Let $(u,v)$ be a solution of  (\ref{sys1}). 
Then $\alpha:=\ei_{\RR^N} v$ is a zero of  the function  $f$ and
  $\beta:=\ei_{\RR^N} u$ is a zero of the function $g$.
\end{corollary}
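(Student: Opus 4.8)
The plan is to derive Corollary~\ref{cor:inf} directly from Theorem~\ref{th:zero} by exploiting the symmetry of the system \eqref{sys1}. First I would apply Theorem~\ref{th:zero} verbatim: since $f(x,u,v)=f(v)$ with $f:[0,+\infty[\to[0,+\infty[$ continuous and we impose no structural assumption on $g$, the theorem asserts that any weak solution $(u,v)$ of \eqref{sys1} satisfies $f(\alpha)=0$, where $\alpha:=\ei_{\RR^N}v$. This gives the first half of the claim with no extra work.

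For the second half, I would observe that the pair $(v,u)$ is a weak solution of the system obtained from \eqref{sys1} by swapping the roles of the two equations, namely
\begin{equation*}
\begin{cases}
-\qlap v\ge g(u),\quad &\text{on }\RN,\\
-\plap u\ge f(v),\quad &\text{on }\RN,\\
u>0,\ v>0,
\end{cases}
\end{equation*}
which is again of the form \eqref{sys1} with the exponents $p,q$ interchanged, with the first nonlinearity $g(u)$ depending only on the ``new second variable'' $u$ (and continuous), and with the second nonlinearity $f(v)$ playing the role of the unconstrained right-hand side. Applying Theorem~\ref{th:zero} to this swapped system, with $g$ in the role of $f$, yields that $\beta:=\ei_{\RR^N}u$ is a zero of $g$, i.e. $g(\beta)=0$. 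Combining the two conclusions completes the proof.

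The only point requiring a line of justification is that Theorem~\ref{th:zero} is genuinely insensitive to which of the two exponents is $p$ and which is $q$, and that the Weak Harnack inequality (Lemma~\ref{lewh}) and its consequence \eqref{27} in Theorem~\ref{ape} apply symmetrically to both components under the standing hypothesis $Q>p,q>1$ imposed at the start of Section~\ref{sec4}; all of these ingredients are stated for a generic exponent in $(1,Q)$, so the interchange is legitimate. I do not expect any real obstacle here — the corollary is a formal consequence of Theorem~\ref{th:zero} together with the symmetry of the setup, so the ``proof'' is essentially: apply Theorem~\ref{th:zero} once to $(u,v)$ and once to $(v,u)$.
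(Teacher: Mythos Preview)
Your proposal is correct and matches the paper's intended argument: the corollary is stated without proof immediately after Theorem~\ref{th:zero}, precisely because it follows by applying that theorem once to $(u,v)$ and once to the swapped system $(v,u)$, exactly as you describe. Your remark that the standing hypothesis $Q>p,q>1$ makes Theorem~\ref{th:zero} and its ingredients (Lemma~\ref{lewh}, estimate~\eqref{27}) apply symmetrically to both components is the only point worth noting, and it is indeed unproblematic.
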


\begin{theorem}\label{th:est} 
  Let  $f(x,v,u)=f(v)$, with $f:[0,+\infty[\to ]0,+\infty[$ be a continuous
  function satisfying
  $$ \liminf_{t\to 0}\frac{f(t)}{t^{{a}}}>0\quad (possibly\ +\infty), \ \ with \quad {a}>0.\eqno{(f_0)}$$
  Let  $(u,v)$ be a weak solution of (\ref{sys1}) such that ${\ei_{\RR^N}v}=0$. 
  Then there exists $c>0$ such that
  for $R$ sufficiently large, the following estimates hold

  \begin{equation}    \label{eq:i}
    {\ei_{B_R}v}\leq c R^{-p/{a}} ({\ei_{B_R}u})^{\frac{p-1}{{a}}}, 
  \end{equation}
  \begin{equation}  \label{eq:ii}
  {\int_{B_R}g(x,u,v)}\le c R^{Q-q-(q-1){p}/{a}}  
  ({\ei_{B_R}u})^{\frac{({p}-1)(q-1)}{{a}}},
  \end{equation}
  \begin{equation}   \label{eq:iii}
    {\int_{B_R}g(x,u,v)}\le c R^{Q-q-Q(q-1)/{a}}  
    \left( \int_{A_R/2} f(v) \right)^\frac{q-1}{{a}}.  \end{equation}
  Moreover if ${\ei_{\RR^N} u}=0$, and $g(x,u,v)=g(u)$ with  
  $g:[0,+\infty[\to ]0,+\infty[$ a continuous function satisfying
  $$\liminf_{t\to 0}\frac{g(t)}{t^{{b}}}>0 \quad (possibly \ +\infty),\quad with\ b>0, \eqno{(g_0)}$$
  %with ${a}{b}>({p}-1)({q}-1)$, 
  then we have
  \begin{equation}    \label{eq:iv}
    \left({\ei_{B_R}u}\right)^{{a}{b}-({p}-1)({q}-1)}\leq c 
    R^{-{a}{q}-{p}({q}-1)}, \quad
  \left({\ei_{B_R}v}\right)^{{a}{b}-({p}-1)({q}-1)}\leq c R^{-{b}{p}-{q}({p}-1)}, 
  \end{equation}
  \begin{equation}   \label{eq:v}
    {\int_{B_R}f(v)dx\leq c
    R^{Q-{p}-({p}-1)\frac{{q}}{{b}}-Q\frac{({p}-1)({q}-1)}{{a}{b}}}\left(\int_{A_{R/2}}f(v)dx\right)^{\frac{({p}-1)({q}-1)}{{a}{b}}}.}
  \end{equation}
%where $c_{{i},{R}}(u) = {\frac{c_{i}}{1- d_{i}\,\,({\it essinf_{B_{ R/2}}u})^{p-1}}}$ and $c_i, d_i>0$ for $i=1,2$.
\end{theorem}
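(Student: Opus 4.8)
The plan is to combine the universal bound \eqref{27} of Theorem~\ref{ape} with a lower bound for $\int_{B_R}f(v)$ and $\int_{A_{R/2}}f(v)$ extracted from the local assumption $(f_0)$ together with Lemma~\ref{le3.1}. As a preliminary remark: since $f\ge0$ and $g\ge0$, the functions $u,v$ are in particular nonnegative weak solutions of $-\plap u\ge0$ and $-\qlap v\ge0$; as $Q>p,q>1$, Lemma~\ref{lewh} gives that {\rm(WH)} holds for $u$ with some exponent $\sigma\in\big(p-1,\tfrac{Q(p-1)}{Q-p}\big)$ and for $v$ with some $\delta\in\big(q-1,\tfrac{Q(q-1)}{Q-q}\big)$ (note $\tfrac{Q(p-1)}{Q-p}>p-1$), and {\rm(WH)} itself then forces $u^\sigma,v^\delta\in L^1_{loc}$, so the hypotheses of Theorem~\ref{ape} yielding \eqref{27} are met. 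By $(f_0)$ there are $c_0,\varepsilon_0>0$ with $f(t)\ge c_0t^{{a}}$ for $0\le t\le\varepsilon_0$. Since $\ei_{\RN}v=0$, Lemma~\ref{le3.1} yields, for all $R$ large, $|B_R\cap T_{\varepsilon_0}^v|\ge\tfrac12|B_R|$ and $|A_{R/2}\cap T_{\varepsilon_0}^v|\ge\tfrac12|A_{R/2}|$; using $v^{{a}}\ge(\ei_{B_R}v)^{{a}}$ a.e. on $B_R$ and $f(v)\ge c_0v^{{a}}$ on $\{v<\varepsilon_0\}$ this gives
$$\int_{B_R}f(v)\ge c\,(\ei_{B_R}v)^{{a}}|B_R|,\qquad \int_{A_{R/2}}f(v)\ge c\,(\ei_{B_R}v)^{{a}}|A_{R/2}|,$$
where for the second one I also use $\ei_{A_{R/2}}v\ge\ei_{B_R}v$.

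Next I would feed these into \eqref{27}. The first inequality of \eqref{27}, $\frac1{|B_R|}\int_{B_R}f(v)\le c_5R^{-p}(\ei_{B_R}u)^{p-1}$, together with $\frac1{|B_R|}\int_{B_R}f(v)\ge c(\ei_{B_R}v)^{{a}}$, gives $(\ei_{B_R}v)^{{a}}\le cR^{-p}(\ei_{B_R}u)^{p-1}$, i.e. \eqref{eq:i}. Substituting \eqref{eq:i} into the second inequality of \eqref{27}, $\frac1{|B_R|}\int_{B_R}g\le\tilde c_5R^{-q}(\ei_{B_R}v)^{q-1}$, and using $|B_R|=w_SR^Q$, yields \eqref{eq:ii} once the powers of $R$ are collected. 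For \eqref{eq:iii} I would instead use the annulus bound in the form $(\ei_{B_R}v)^{{a}}\le cR^{-Q}\int_{A_{R/2}}f(v)$ and insert it into $\int_{B_R}g\le cR^{Q-q}(\ei_{B_R}v)^{q-1}$; bookkeeping the exponents gives \eqref{eq:iii}.

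For the ``moreover'' part, note that under the extra hypotheses the system \eqref{sys1} is invariant under the exchange $u\leftrightarrow v$, $p\leftrightarrow q$, $f\leftrightarrow g$, ${a}\leftrightarrow{b}$, and that $\ei_{\RN}u=0$ with $(g_0)$ plays for $(u,g,{b})$ exactly the role that $\ei_{\RN}v=0$ with $(f_0)$ played for $(v,f,{a})$. Hence the already established \eqref{eq:i} and \eqref{eq:iii} have symmetric counterparts
$$\ei_{B_R}u\le cR^{-q/{b}}(\ei_{B_R}v)^{(q-1)/{b}},\qquad \int_{B_R}f(v)\le cR^{\,Q-p-Q(p-1)/{b}}\Big(\int_{A_{R/2}}g(u)\Big)^{(p-1)/{b}}.$$
Substituting \eqref{eq:i} into the first of these, dividing by the resulting (positive) power of $\ei_{B_R}u$ and raising to the power ${a}{b}$ gives the first inequality of \eqref{eq:iv}; the symmetric substitution gives the second. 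Finally, estimating $\int_{A_{R/2}}g(u)\le\int_{B_R}g(u)$ and bounding the right side by \eqref{eq:iii}, then simplifying the power of $R$, produces \eqref{eq:v}.

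The argument is elementary once \eqref{27} and Lemma~\ref{le3.1} are in hand, the conceptual point being that $(f_0)$ plus Lemma~\ref{le3.1} supply the missing \emph{lower} bound on the source term that lets the two inequalities of \eqref{27} be chained together. I expect the only delicate points to be (a) checking that admissible $\sigma,\delta$ exist so that \eqref{27} genuinely applies, and (b) the somewhat tedious bookkeeping of the exponents of $R$ in \eqref{eq:iii} and \eqref{eq:v}.
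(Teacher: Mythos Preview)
Your argument is correct and follows essentially the same route as the paper: extract from $(f_0)$ and Lemma~\ref{le3.1} a lower bound $\int_{B_R}f(v)\ge c(\ei_{B_R}v)^{a}|B_R|$ (and the annulus analogue), combine it with the two inequalities of \eqref{27} to get \eqref{eq:i}--\eqref{eq:iii}, and then exploit the $u\leftrightarrow v$, $p\leftrightarrow q$, $f\leftrightarrow g$, $a\leftrightarrow b$ symmetry to obtain \eqref{eq:iv}--\eqref{eq:v}. The only (harmless) difference is that you are more explicit than the paper about verifying that admissible exponents $\sigma>p-1$, $\delta>q-1$ exist so that \eqref{27} applies, and about the elementary inclusion $A_{R/2}\subset B_R$ used when chaining \eqref{eq:iii} with its symmetric counterpart.
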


\bp From  ($f_0$), we deduce that there exist $c_f>0$ and  
  $\epsilon>0$
  such that
  $$f(t)\ge c_f t^{{a}} \ \ for \ \ 0<t<\epsilon. $$ 
  Set $T_\varepsilon^v\decl \{x\in\mathbb R^N\, :\, v(x)<\epsilon\}$.
  From the first inequality of (\ref{27}), we have 
  $$c_5 R^{-p}\left(\ei_{B_R} u\right)^{p-1} \ge \frac1{|B_R|}\int_{B_R} f(v)
  \ge \frac1{|B_R|}\int_{B_R\cap T_\epsilon^v} c_f v^{{a}}\ge 
  \frac{\abs{B_R\cap T_\epsilon^v}}{|B_R|}c_f (\ei_{B_R\cap T_\epsilon^v} v)^{{a}}.$$
  Next, since $\ei_{B_R\cap T_\epsilon^v} v \ge\ei_{B_R} v $, from Lemma \ref{le3.1}
  we obtain (\ref{eq:i}).

  Combining the second inequality in (\ref{27}) and (\ref{eq:i}) we deduce (\ref{eq:ii}).
  \noindent
Now,  in order to show that (\ref{eq:iii}) holds, we shall argue as follows.
  Form (\ref{27}) we have
  \begin{eqnarray*}
    \frac1{|B_R|}\int_{B_R} g(x,u,v)\le \tilde c_5 R^{-q}\left(\ei_{B_R}v\right)^{q-1}
    \\
    \le  c_5 R^{-q}\left( \frac1{\abs{A_{R/2}\cap T_\epsilon^v}}\int_{\abs{A_{R/2}\cap T_\epsilon^v}} (\ei_{A_{R/2}\cap T_\epsilon^v} v)^{{a}}\right)^{\frac{q-1}{a}}\\
    \le  
    c_5 R^{-q} \frac1{\abs{A_{R/2}\cap T_\epsilon^v}^{\frac{q-1}{a}}}\left(\frac1{c_f}\int_{\abs{A_{R/2}\cap T_\epsilon^v}}
    f(v)\right)^{\frac{q-1}{a}}\le \\
    \le c_5 R^{-q} \frac1{\abs{A_{R/2}\cap T_\epsilon^v}^{\frac{q-1}{a}}}\left(\frac1{c_f}\int_{\abs{A_{R/2}}} f(v) \right)^{\frac{q-1}{a}},
\end{eqnarray*}
  which, by  Lemma \ref{le3.1}, implies the claim.

  Assume that ($g_0$) holds. Then, from the first part of the theorem,
  for $R$ large, it follows that
  $${\ei_{B_R}u}\leq c R^{-q/{b}} ({\ei_{B_R}v})^{\frac{q-1}{{b}}} ,$$
  which, together with (\ref{eq:i}), implies the estimates in (\ref{eq:iv}).

  Similarly, we have that for $R$ large there holds
  $$\displaystyle{\int_{B_R}f(v)}\le c R^{Q-p-Q(p-1)/{b}}  
  \left( \int_{A_R/2} g(u) \right)^\frac{p-1}{{b}},$$
  which, combined with (\ref{eq:iii}), implies inequality (\ref{eq:v}), thereby concluding the proof.
\ep

\begin{theorem}\label{th:hyp}  
  Let  $f(x,v,u)=f(v)$, $g(x,u,v)=g(u)$
  with $f,g:]0,+\infty[\to [0,+\infty[$ be continuous
  functions satisfing  ($f_0$) and ($g_0$) respectively.
  If 
  \begin{equation}    \label{eq:hyp}
    \min\left\{ Q-{p}-({p}-1)\frac{{q}}{{b}},\ \  Q-{q}-({q}-1)\frac{{p}}{{a}} \right\}\le 
  Q\frac{({p}-1)({q}-1)}{{a}{b}}  \end{equation}
  then  (\ref{sys1}) has no weak solution  $(u,v)$ such that  $\ei_{\RR^N} u=\ei_{\RR^N} v=0$.
\end{theorem}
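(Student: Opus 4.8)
The plan is to argue by contradiction: suppose $(u,v)$ is a weak solution of (\ref{sys1}) with $\ei_{\RN}u=\ei_{\RN}v=0$, and use the a priori bounds of Theorem \ref{th:est} to reach an absurdity. By the symmetry of (\ref{sys1}) and of the left-hand side of (\ref{eq:hyp}) under the interchange $(u,p,f,a)\leftrightarrow(v,q,g,b)$, I may assume that the first argument of the minimum in (\ref{eq:hyp}) is the one bounded by $Q(p-1)(q-1)/(ab)$; equivalently, $\Theta:=Q-p-(p-1)q/b-Q(p-1)(q-1)/(ab)\le 0$. I also record two facts. First, $\int_{\RN}f(v)\,dx>0$: by ($f_0$) there are $c_f,\varepsilon>0$ with $f(t)\ge c_ft^{a}$ on $(0,\varepsilon)$, and since $\ei_{\RN}v=0$ forces $|\{v<\varepsilon\}|>0$ while $v>0$ a.e., we get $\int_{\RN}f(v)\ge c_f\int_{\{v<\varepsilon\}}v^{a}\,dx>0$; in particular $\int_{B_{R_0}}f(v)>0$ for some $R_0$. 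Second, $\ei_{B_R}u>0$ and $\ei_{B_R}v>0$ for all $R>0$: $R\mapsto\ei_{B_R}u$ is finite and nonincreasing, so if it vanished at some $R_1$ it would vanish for $R\ge R_1$, and the first inequality of (\ref{27}) would then force $\int_{B_R}f(v)=0$ there, hence $f(v)=0$ a.e., against the first fact (the case of $v$ is symmetric, using ($g_0$) and the second inequality of (\ref{27})). This positivity is what makes the estimates in (\ref{eq:iv}) legitimate.

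Now split into two cases. If $ab\le(p-1)(q-1)$ --- where (\ref{eq:hyp}) is not used --- the first estimate of (\ref{eq:iv}) already gives a contradiction. When $ab=(p-1)(q-1)$ it reads $1\le cR^{-aq-p(q-1)}$, impossible for $R$ large since $aq+p(q-1)>0$. When $ab<(p-1)(q-1)$, set $\kappa:=(p-1)(q-1)-ab>0$; then the estimate becomes $\ei_{B_R}u\ge c^{-1/\kappa}R^{(aq+p(q-1))/\kappa}$, which tends to $+\infty$, contradicting that $R\mapsto\ei_{B_R}u$ is nonincreasing and finite.

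The main case is $ab>(p-1)(q-1)$, so that $\lambda:=(p-1)(q-1)/(ab)\in(0,1)$, with $\Theta\le 0$ as normalised. Set $G(R):=\int_{B_R}f(v)\,dx$, which is finite (weak-solution definition) and nondecreasing. Since $A_{R/2}\subset B_R$, estimate (\ref{eq:v}) gives $G(R)\le cR^{\Theta}G(R)^{\lambda}$ for $R$ large, hence $G(R)^{1-\lambda}\le cR^{\Theta}\le c$; by monotonicity $G$ is bounded, and monotone convergence yields $G(R)\uparrow G_\infty=\int_{\RN}f(v)\in(0,+\infty)$. Consequently $\int_{A_{R/2}}f(v)\,dx=G(R)-G(R/2)\to 0$ as $R\to\infty$, so plugging this back into (\ref{eq:v}) and using $R^{\Theta}\le 1$ gives $G(R)\le c\,(G(R)-G(R/2))^{\lambda}\to 0$, whence $G_\infty\le 0$ --- a contradiction. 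Had the second argument of the minimum in (\ref{eq:hyp}) been the small one, one would run exactly this argument with $g(u)$, $\int_{B_R}g(u)\,dx$ and the estimate for $g(u)$ analogous to (\ref{eq:v}) (valid by the same symmetry).

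The step I expect to be the crux is this last one: one must first bootstrap the iteration (\ref{eq:v}) into global integrability $f(v)\in L^1(\RN)$, and only then exploit that the annular mass $\int_{A_{R/2}}f(v)\,dx$ is the tail of a convergent integral --- hence $\to 0$ --- to collapse the whole integral, contradicting $\int_{\RN}f(v)>0$. The other delicate point is the preliminary positivity $\ei_{B_R}u,\ei_{B_R}v>0$, needed for (\ref{eq:iv}) to make sense in the first case; everything else is a direct application of Theorem \ref{th:est}.
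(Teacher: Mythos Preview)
Your proof is correct and follows essentially the same route as the paper's: contradiction via the case split $ab\le(p-1)(q-1)$ (using (\ref{eq:iv})) versus $ab>(p-1)(q-1)$ (bootstrapping (\ref{eq:v}) to get $f(v)\in L^1(\RN)$, then feeding the vanishing annular mass back in). The only cosmetic difference is that you establish $\int_{\RN}f(v)>0$ at the outset and contradict it directly, whereas the paper reaches $f(v)=0$ a.e.\ first and then invokes $(f_0)$ to force $v\ge\varepsilon$ a.e., contradicting $\ei_{\RN}v=0$; these are the same argument with the steps reordered. One small simplification: your auxiliary fact $\ei_{B_R}u>0$ follows immediately from the weak Harnack inequality applied to $u$ (since $u>0$ a.e.), so the detour through (\ref{27}) is unnecessary there.
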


\begin{remark}
  Notice that condition (\ref{eq:hyp}) can be also written as
   \begin{equation}    \label{eq:hyp2}
    \max\left\{ abp+aq(p-1),abq+bp(q-1)\right\}\ge Q\left(ab-(p-1)(q-1) \right)
  \end{equation}
  and in the particular case $p=q$, it reads as
  \begin{equation}    \label{eq:hypp}
    \max\left\{ a+p-1,b+p-1\right\}\ge \frac{Q-p}{p(p-1)}\left(ab-(p-1)(q-1) \right).
  \end{equation}
  Finally, in the special case $p=q=2$ all the above conditions become
  \begin{equation}    \label{eq:hypp=2}
    \max\left\{ a+1,b+1\right\}\ge \frac{Q-2}{2}\left(ab-1  \right),
  \end{equation}
  which, in the Euclidean case is the inequality discovered in \cite{miti2}.
\end{remark}

\begin{remark}
  Notice that form (\ref{eq:hyp2}), it is evident that if $ab\le (p-1)(q-1)$,
  then the hypothesis (\ref{eq:hyp}) is satisfied.
\end{remark}

\bp Assume, by contradiction, that $(u,v)$ is a non trivial  weak solution of  (\ref{sys1})
  with  $\ei_{\RR^N} u=\ei_{\RR^N} v=0$.

  First consider the case $ab\le (p-1)(q-1)$. Clearly condition (\ref{eq:hyp}) holds.
  By letting $R\to +\infty$ in  (\ref{eq:iv}) of Theorem \ref{th:est}, 
  we reach a contradiction.

  Let $ab> (p-1)(q-1)$.
  Assume that 
  $$ Q-{p}-({p}-1)\frac{{q}}{{b}} \le  Q\frac{({p}-1)({q}-1)}{{a}{b}}.$$
%  The alternative case is similar.
  
  From (\ref{eq:v}) of Theorem \ref{th:est}, for any $R$ large we have
  \begin{equation}    \label{eq:tec1}
    \int_{B_R}f(v)dx\leq c   %R^{Q-{p}-({p}-1)\frac{{q}}{{b}}-Q\frac{({p}-1)({q}-1)}{{a}{b}}}
\left(\int_{A_{R/2}}f(v)dx\right)^{\frac{({p}-1)({q}-1)}{{a}{b}}},  \end{equation}
  which implies that
  $$\left(\int_{B_R}f(v)dx\right)^{\frac{ab-({p}-1)({q}-1)}{{a}{b}}}
 %\leq c   R^{Q-{p}-({p}-1)\frac{{q}}{{b}}-Q\frac{({p}-1)({q}-1)}{{a}{b}}}
\le c.$$
  Therefore, we obtain that $f(v)\in L^1(\RN)$. Hence, from (\ref{eq:tec1}) it follows that,
  $$f(v(x)) = 0\quad \mbox{a.e. on}\quad \RN.$$ Using this information in   
  (\ref{eq:tec1}) and the condition ($f_0$),  
  for $\epsilon>0$ sufficently small,
  (small enough such that $f(t)\ge c_f t^a$ for $t\in ]0,\epsilon[$),
  we obtain
  $$c_f\int_{ T_\varepsilon^v}  v^a \le \int_{ T_\varepsilon^v }f(v)dx =0 $$  
  where $T_\epsilon^v=\{x\in\mathbb R^N\, :\, v(x)<\epsilon\}$.

  Now since $v\not\equiv 0$, by Harnack's inequality $v^a>0$ a.e. on $\RN$,
  therefore, necessarily $\abs{T_\varepsilon^v}=0$. This implies that
  $v\ge \epsilon$ a.e. contradicting  the fact that $\ei_{\RR^N} u=0$.

  If
  $$ Q-{q}-({q}-1)\frac{{p}}{{a}} \le  Q\frac{({p}-1)({q}-1)}{{a}{b}},$$
  the proof is similar.
\ep

\begin{example} Consider,
 \begin{equation}\label{ex1}
   \begin{cases}-\plap u\ge f(v),   &on\quad\RN, \\ 
   \\
   -\qlap v\ge g(x,u,v), & on\quad\RN,\\
   \\
   u>0,\ v> 0. 
 \end{cases}\end{equation}

  i) If $f(v)=v^{-\gamma}$ with $\gamma>0$ and    $g:\RN\times \RR^+\times\RR^+\to ]0,+\infty[$ is  Carath\'eodory, then the problem (\ref{ex1}) has no weak solution.

ii)   If $f(v)=\frac{1}{1+v^\gamma}$ with $\gamma>0$ and $g:\RN\times \RR^+\times\RR^+\to ]0,+\infty[$ is  Carath\'eodory, then the problem (\ref{ex1}) has no weak solution.

  Indeed, in both cases  the claim follows from Theorem \ref{th:pos}.

  Notice that we do not assume any growth  assumption on $g$.
\end{example}

\begin{example}\label{ex:cos} Let $a\in\RR$ 
  and let $h:\RR\to ]0,+\infty[$ be a  continuous  function.
   
  Consider,
 \begin{equation}
   \begin{cases}-\Delta u\ge v^{a},  & on\quad\RR^3, \\ 
   \\
     -\Delta v\ge h(u) (1-\cos u), & on\quad\RR^3,\\
     \\
   u>0,\ v> 0.
 \end{cases}\end{equation}
%  where $a\in\RR$ and $h:\RR\times\RR\to ]0,+\infty[$ is a continuous positive function.

  The problem has no non constant weak solutions. Indeed, in the case $a\le 0$ the claim follows from the 
 previous example. Let $a>0$.
 From Corollary \ref{cor:inf} it follows that
 $\ei_{\RR^N} v=0$ and $\ei_{\RR^N} u= 2k\pi$ where  $k$ is an integer.
 By traslation invariance we can assume that $k=0$. Now we are in the position to apply
 Theorem \ref{th:hyp}. In this case $b=2$ and the hypothesis (\ref{eq:hyp}), 
 or equivalently (\ref{eq:hypp=2}), is satisfied provided
 $$\max \{a+1,3\}\ge \frac12 (2a-1)=a-\frac12.$$
Notice that the above inequality holds for any $a>0$.
\end{example}

\begin{example}\label{ex:h}
Let $\gamma,\delta \in\RR$ and let $h: \RR^+\times\RR^+\to [0,+\infty[$ be a 
  continuous nonnegative function. Consider the system
 \begin{equation}
   \begin{cases}-\plap u\ge \abs{v-1}^{\gamma},  & on\quad\RN, \\ 
   \\
     -\qlap v\ge v^\delta+h(u,v), & on\quad\RN,\\
     \\
   u>0,\ v> 0.
 \end{cases}\end{equation}
%  where $a\in\RR$ and $h:\RR\times\RR\to ]0,+\infty[$ is a continuous positive function.

  Our claim is that this problem has no weak solutions. 
  
  Indeed, in the case $\gamma\le 0$ or $\delta\le 0,$ this follows by
  applying Theorem \ref{th:pos} or Corollary 2.4 of \cite{dm} respectively.
  
 Consider now the case $\gamma,\delta>0$.
  From Theorem  \ref{th:zero} it follows that $\ei_{\RR^N} v=1.$
  
  Using this information, and setting $v_1\decl v-1$, we see that $(u,v_1)$ is a weak solution of
   \begin{equation}
   \begin{cases}-\plap u\ge v_1^{\gamma},  & on\quad\RN, \\ 
   \\
     -\qlap {v_1}\ge 1, & on\quad\RN,\\
     \\
   u>0,\ v_1> 0.
 \end{cases}\end{equation}
In other words $(u,v_1)$ is a weak solution of (\ref{sys1}) with $g=g(u)=1>0$. 
 An application of Theorem \ref{th:pos} implies the claim.
\end{example}

\begin{example} Let $\gamma,\delta>0 $ and let $h:\RR^+\to ]0,+\infty[$ be 
  a continuous positve function. 
  By an  argument  similar to the one used in the above  example, 
 we can  show that the system
 \begin{equation}
   \begin{cases}-\plap u\ge \abs{v-1}^{\gamma},  & on\quad\RN, \\ 
   \\
     -\qlap v\ge v^\delta h(u), & on\quad\RN,\\
     \\
   u>0,\ v> 0,
 \end{cases}\end{equation}
 has no weak solutions. We omit the details.
\end{example}

\section{Some Extensions}\label{sec5}
 In what follows we suppose that $f,g:\RN\times]0,+\infty[\times ]0,+\infty[\to[0,+\infty[$
are two nonnegative Charatheodory functions.

Let $p_1,p_2>1$ and  for $i=1,2$, 
$\A_{p_i}: \RN\times\mathbb R\times\mathbb R^l\to\mathbb R^l$ denotes
a Carath\'eodory function.
We assume that the function $\mathscr A_{p_i}$ is W-$p_i$-C, {\it weakly-$p_i$-coercive}, namely  there exists a constant $k>0$ such that
$$(\mathscr A_{p_i}(x,t,\xi)\cdot \xi)\ge k|\mathscr A_{p_i}(x,t,\xi)|^{p_i'}\quad\mbox{for all } (x,t,\xi)\in\RN \times\mathbb R\times\mathbb R^l. \eqno(\mbox{W-}p_i\mbox{-C}).$$
See \cite{bvp,mp,s} for details.

Consider the  following,
\begin{equation}
 \begin{cases}-\diverl(\A_1(x,u,\grl u)\ge f(x,u,v), & \quad on\quad\RN \\ 
 \\
  -\diverl(\A_2(x,v,\grl v)\ge g(x,u,v), &\quad on\quad\RN,\\
  \\  u>0,\ v> 0. 
 \end{cases}\label{sys1w}\end{equation}

As usual, a pair of functions $(u,v)\in W^{1,p_1}_{L,loc}(\RN)\times W^{1,p_2}_{L,loc}(\RN)$
 is a {\it weak solution of} \eqref{sys1w} 
if
$f(\cdot,u,v),\ g(\cdot,u,v), \abs{A_{p_1}(\cdot,u,\grl u)}^{p_1'},
\abs{A_{p_2}(\cdot,v,\grl v)}^{p_2'} \in L^1_{loc}(\RN)$,
and 
\begin{equation}
\int_{\RN}(\mathscr A_p(x,u,\grl u)\cdot\grl \phi_1)\ge\int_{\RN} f(x,u,v)\phi_1 
\end{equation}
\begin{equation}
\int_{\RN}(\mathscr A_q(x,v,\grl v)\cdot\grl \phi_2)\ge\int_{\RN} g(x,u,v)\phi_2
\end{equation}
for all non--negative functions $\phi_1$, $\phi_2\in \Cuno_0(\RN)$.

We shall assume that $Q>p_i>1$. This restriction is justified by the fact that
an analogue of Theorem \ref{parabolic} holds. Namely, if $w$ is a weak solution of 
the problem (\ref{eq:superw}) below and $p_i>Q$, then $w$ is a constant. See \cite{dam09}.

Furthermore we shall suppose that a weak Harnack inequality (WH) holds for solutions of
\begin{equation}\label{eq:superw}
  \begin{cases}-\mathrm{div}(\mathscr A_{p_i}(x,w,\grl w))\ge0&\quad\mbox{on }\RN,\\
  \\
w\ge 0&\quad\mbox{on }\RN,
\end{cases} \qquad i=1,2.\end{equation}
Namely, for $i=1,2$ there exists $\sigma_i>p_i-1$ and $c_H$ such that 
if  $w$ is a weak solution of (\ref{eq:superw}), then for any $R>0$ we have
$$\left(\frac{1}{|B_R|}\int_{B_R}w^{\sigma_i}\right)^{1/\sigma_i}\le c_H\, \ei_{B_{R/2}}w. \eqno{\mbox{\rm(WH)}}$$

Examples of operators for which the weak Harnack inequality holds
are given by the following.
\begin{lemma}(Weak Harnack Inequality, see \cite{cdg}) Let $Q>p_i>1$. Let  $\A_{p_i}$ be 
  {\bf S-$p_i$-C} ({\it strongly-$p_i$-coercive}), that is 
  there exist two constants $k,h >0$ such that
$$(\mathscr A_{p_i}(x,t,\xi)\cdot \xi)\ge h |\xi|^{p_i}\ge k |\mathscr A_{p_i}(x,t,\xi)|^{p_i'}\quad\mbox{for all } (x,t,\xi)\in\RN\times\mathbb R\times\mathbb R^l. \eqno(\mbox{S-}p_i\mbox{-C})$$
Then for any $\sigma_i\in (0,\frac{Q(p_i-1)}{Q-p_i})$ there exists $c_{H}>0$ 
such that
for any $u\in W^{1,p_i}_{L,loc}(\RN)$  weak solution of (\ref{eq:superw}),
and $R>0$, (WH) holds.
\end{lemma}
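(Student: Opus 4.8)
The statement is the subelliptic weak Harnack inequality of Capogna, Danielli and Garofalo \cite{cdg}; the argument is an adaptation of Moser's iteration to the Carnot group setting, which I would outline as follows. Fix $i\in\{1,2\}$ and abbreviate $p:=p_i$, $\sigma:=\sigma_i$, and let $u\ge 0$ be a weak solution of \eqref{eq:superw}. The first step is a family of Caccioppoli-type inequalities: testing the weak formulation with $\eta^p(u+\epsilon)^\beta$, where $\epsilon>0$, $\eta\in\Cuno_0(\RN)$ is a nonnegative cut-off, and $\beta<0$ with $\beta\ne 1-p$, and using the lower bound $(\A_p(x,t,\xi)\cdot\xi)\ge h\abs\xi^p$ to generate the gradient term on the left while using $h\abs\xi^p\ge k\abs{\A_p(x,t,\xi)}^{p'}$ to absorb the contribution of $\grl\eta$ through Young's inequality, one obtains, with $u_\epsilon:=u+\epsilon$ and $w:=u_\epsilon^{(\beta+p-1)/p}$,
$$\int_{\RN}\abs{\grl w}^p\eta^p\le C(\beta)\int_{\RN}w^p\abs{\grl\eta}^p .$$
For the borderline exponent $\beta=1-p$ the same test function yields instead the logarithmic Caccioppoli inequality, controlling $\int_{\RN}\abs{\grl\log u_\epsilon}^p\eta^p$ by a constant times $\int_{\RN}\abs{\grl\eta}^p$.

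The second ingredient is the subelliptic Sobolev inequality on metric balls, which on a Carnot group holds with the homogeneous dimension $Q$ in place of $N$: for $\psi\in\Cuno_0(B_R)$,
$$\left(\mint_{B_R}\abs\psi^{pQ/(Q-p)}\right)^{(Q-p)/(pQ)}\le C\,R\left(\mint_{B_R}\abs{\grl\psi}^p\right)^{1/p}.$$
Inserting the Caccioppoli estimates into this inequality produces a reverse H\"older inequality for powers $u_\epsilon^\delta$ on a pair of nested balls, with gain exponent $\chi:=Q/(Q-p)>1$. Iterating it over a geometric sequence of radii between $R/2$ and $R$ --- upward through positive exponents, and separately downward through negative ones --- gives, for every $\sigma\in\bigl(0,\tfrac{Q(p-1)}{Q-p}\bigr)$ and every sufficiently small $t_0>0$,
$$\left(\mint_{B_R}u_\epsilon^{\sigma}\right)^{1/\sigma}\le C\left(\mint_{B_{3R/4}}u_\epsilon^{t_0}\right)^{1/t_0},\qquad \left(\mint_{B_{3R/4}}u_\epsilon^{-t_0}\right)^{-1/t_0}\le C\,\ei_{B_{R/2}}u_\epsilon,$$
the second because the harmonic-type means $\bigl(\mint u_\epsilon^{-t}\bigr)^{-1/t}$ decrease to $\ei\,u_\epsilon$ as $t\to\infty$. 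It then remains to bridge the small positive and small negative exponents: the logarithmic Caccioppoli inequality says that $\log u_\epsilon$ has bounded mean oscillation on $B_{3R/4}$, and since a Carnot group with its Carnot--Carath\'eodory metric is a space of homogeneous type, the John--Nirenberg inequality supplies an $\epsilon_0>0$ with $\bigl(\mint_{B_{3R/4}}u_\epsilon^{\epsilon_0}\bigr)\bigl(\mint_{B_{3R/4}}u_\epsilon^{-\epsilon_0}\bigr)\le C$. Chaining the three estimates with $t_0=\epsilon_0$ and letting $\epsilon\to 0^+$ yields $\bigl(\mint_{B_R}u^{\sigma}\bigr)^{1/\sigma}\le c_H\,\ei_{B_{R/2}}u$.

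Finally, the independence of $c_H$ from $R$ follows from the dilation structure: if $u$ solves \eqref{eq:superw} on $\RN$, then $u\circ\delta_R$ solves a problem of the same form with an operator that is again S-$p$-C with the same constants $h,k$, and $\delta_R$ maps $B_1$ onto $B_R$ while rescaling the averaged integrals correctly, so it is enough to treat $R=1$. The main obstacle --- and the real content of \cite{cdg} --- is twofold: verifying the subelliptic Sobolev, Poincar\'e and John--Nirenberg inequalities in the Carnot--Carath\'eodory geometry (classical by now, but nontrivial), and bookkeeping the constants $C(\beta)$ through the iteration so that they stay bounded as $\beta$ sweeps the relevant interval and the final $c_H$ depends only on $Q$, $p$, $\sigma$, $h$, $k$ and the structural constants of the group. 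We refer to \cite{cdg} for these details.
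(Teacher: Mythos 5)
The paper does not prove this lemma: it is quoted as a known result, with a pointer to Capogna, Danielli and Garofalo \cite{cdg}. Your sketch is a faithful outline of the Moser-iteration argument that the cited reference carries out, with all the right ingredients in the right order: Caccioppoli inequalities from the test function $\eta^p(u+\epsilon)^\beta$ using both halves of (S-$p_i$-C), the borderline case $\beta=1-p$ producing the logarithmic Caccioppoli estimate, the subelliptic Sobolev inequality with the homogeneous dimension $Q$ driving a reverse H\"older inequality with gain $\chi=Q/(Q-p)$, separate upward and downward iterations glued together through a John--Nirenberg step on the Carnot--Carath\'eodory space of homogeneous type, and the passage $\epsilon\to 0^+$. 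The range $\sigma\in\bigl(0,\tfrac{Q(p-1)}{Q-p}\bigr)$ is precisely what the upward iteration reaches before the Caccioppoli constant $C(\beta)$ blows up at $\beta=0$, so the endpoint is handled by stopping just short of it.

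One small point worth tightening: the dilation argument for $R$-independence needs a word of care when $\A_{p_i}$ depends explicitly on $x$ (and on $t$), since $u\circ\delta_R$ satisfies an inequality driven by the rescaled field $\tilde\A(y,t,\xi):=\A_{p_i}(\delta_R y,\,t,\,\xi)$ and one must observe that (S-$p_i$-C) is preserved with the \emph{same} constants $h,k$ because the structural condition is pointwise in $(x,t,\xi)$ and independent of the base point. Alternatively — and this is what \cite{cdg} effectively does — one keeps $R$ throughout and notes that the factor $R$ in the Sobolev inequality combines with the factor $R^{-1}$ coming from $\abs{\grl\eta}\le C/R$ for the standard cut-offs so that the iteration constants are automatically scale-invariant. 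Either route is fine; yours is slightly terser but needs the observation about $h,k$ being unchanged. Overall the proposal is correct and matches the approach the paper relies on.
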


The results of the previous Section can be reformulated for the problem (\ref{sys1w})
as follows.
\begin{theorem}
 Let $f(x,v,u)=f(v)$ with $f:[0,+\infty[\to ]0,+\infty[$ be continuous.
 No extra assumption on $g$.
 Then (\ref{sys1w}) has no solutions.
 \end{theorem}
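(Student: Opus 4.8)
The plan is to adapt the proof of Theorem~\ref{th:pos} almost verbatim, replacing the use of the $p$-Laplacian a priori estimate~\eqref{27} by its counterpart for weakly-$p_i$-coercive operators. First I would observe that the hypotheses of this section guarantee exactly the two ingredients used in the scalar case: a weak Harnack inequality (WH) holds for nonnegative weak solutions of~\eqref{eq:superw} with some exponents $\sigma_i>p_i-1$, and an analogue of Theorem~\ref{ape} is available. Indeed, the key estimate~\eqref{27} extends to W-$p_i$-C operators because the derivation in the proof of Theorem~\ref{ape} only ever uses the divergence structure together with the bound $|\mathscr A_{p_i}(x,t,\xi)|\le k^{-1}(\mathscr A_{p_i}(x,t,\xi)\cdot\xi)^{1/p_i}$ coming from (W-$p_i$-C); this is precisely how one passes from $\abs{\grl u}^{p-1}$ to $\abs{\mathscr A_{p_1}(x,u,\grl u)}^{p_1'/p_1'}$ in the Hölder step. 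Consequently one obtains, for the first inequality of~\eqref{sys1w},
\[
\frac1{|B_R|}\int_{B_R} f(v)\le c\,R^{-p_1}\Big(\ei_{B_R}u\Big)^{p_1-1}
\]
for all $R>0$, which is the only consequence of the estimates actually needed in the argument.

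Next I would run the dichotomy on $\inf_{t\ge0}f(t)$ exactly as in Theorem~\ref{th:pos}. If $\inf_{t\ge0}f(t)=:c_0>0$, then the left-hand side above is bounded below by $c\,c_0$, while the right-hand side tends to $0$ as $R\to+\infty$ provided $\ei_{B_R}u$ stays bounded; more robustly, one invokes the Liouville-type result for a single weakly-$p_1$-coercive inequality with a positive constant right-hand side, which is available in this setting (see \cite{dam09}), to reach a contradiction immediately. If instead $\inf_{t\ge0}f(t)=0$, fix $\sigma\in(0,\sigma_2)$ with $\sigma>p_2-1$ (permitted since (WH) holds for the second operator with exponent $\sigma_2>p_2-1$), set $h(t):=f(t^{1/\sigma})$ and $M:=(\ei_{B_{R_0}}u)^{p_1-1}$ for some fixed large $R_0$; then for $R>R_0$,
\[
R^{-p_1}M\ge c\,\frac1{|B_R|}\int_{B_R} h(v^\sigma).
\]
Using the convex nonincreasing minorant $h^*$ of $h$ with $h^*(t)\to0$ at infinity (constructed as in \cite{CDMsteklov}) and Jensen's inequality for the average $\mint_{B_R}$, one gets $h^*\big(\mint_{B_R}v^\sigma\big)\le C R^{-p_1}\to0$, hence $\mint_{B_R}v^\sigma\to+\infty$, and then the weak Harnack inequality (WH) for the second operator forces $\ei_{B_R}v\to+\infty$ as $R\to+\infty$, which is impossible.

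The only genuine point requiring care—and the step I expect to be the main obstacle—is verifying that estimate~\eqref{27} really does go through for the general W-$p_i$-C operator rather than just the pure $p$-Laplacian: one must re-examine the mollification argument at the start of the proof of Theorem~\ref{ape} and the duality/Young inequality steps, checking that $w_\varepsilon^\alpha\phi_1$ is an admissible test function and that the integrability hypotheses $\abs{\mathscr A_{p_i}(\cdot,u,\grl u)}^{p_i'}\in L^1_{loc}$ built into the definition of weak solution are exactly what is needed to justify the limit passages. Once this is in place, the remainder is a line-by-line transcription of the proof of Theorem~\ref{th:pos}, so I would state the extended a priori estimate as a preliminary remark (or simply cite the already-proved Theorem~\ref{ape}, noting its proof only used the coercivity inequality) and then keep the proof of the present theorem to a few lines.
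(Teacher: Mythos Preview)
Your proposal is correct and follows exactly the approach the paper intends: Section~\ref{sec5} gives no separate proof but states that the results of Section~\ref{sec4} ``can be reformulated for the problem~(\ref{sys1w}),'' and your reconstruction of how Theorem~\ref{th:pos} transfers---via the extension of estimate~\eqref{27} to W-$p_i$-C operators together with the assumed weak Harnack inequality---is precisely what is meant. Your identification of the one point needing care (that the derivation of~\eqref{19}--\eqref{27} in Theorem~\ref{ape} uses only the coercivity bound $(\mathscr A_{p_i}\cdot\xi)\ge k|\mathscr A_{p_i}|^{p_i'}$ and the integrability hypotheses in the definition of weak solution) is accurate; once noted, the rest is indeed a transcription.
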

\begin{corollary} 
 Let  $f(x,v,u)=f(v)$,  $g(x,v,u)=g(u)$, with  $f,g:]0,+\infty[\to ]0,+\infty[$ be continuous. 
 If $(u,v)$ is a solution of (\ref{sys1w}),
 then (\ref{eq:liminf}) holds and $\ei_{\RR^N} u=\ei_{\RR^N} v=0$.
\end{corollary}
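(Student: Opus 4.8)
The plan is to run, for the weakly-$p_i$-coercive system \eqref{sys1w}, the very same argument that proves the corollary immediately following Theorem~\ref{th:pos} in Section~\ref{sec4}, with the theorem stated just above (the analogue of Theorem~\ref{th:pos} for \eqref{sys1w}) playing the role of Theorem~\ref{th:pos}. The only point that is not a verbatim transcription is to check that the two elementary operations used in that proof --- replacing a right-hand side by a smaller nonnegative datum, and translating a dependent variable by a constant --- keep us inside the hypotheses of the theorem we invoke.

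First I would establish \eqref{eq:liminf}. Arguing by contradiction, suppose $\liminf_{t\to0}f(t)=\ell>0$ (the value $\ell=+\infty$ being allowed). Fix a constant $c$ with $0<c<\ell$ (or $c=1$ when $\ell=+\infty$) and an $\varepsilon>0$ with $f\ge c$ on $]0,\varepsilon[$, and set $\tilde f:=\min\{f,c\}$. Then $\tilde f$ is continuous and strictly positive on $]0,+\infty[$ and equals $c$ on $]0,\varepsilon[$, hence extends to a continuous function $[0,+\infty[\,\to\,]0,+\infty[$; moreover $0\le\tilde f\le f$, so $(u,v)$ is still a weak solution of \eqref{sys1w} when $f$ is replaced by $\tilde f$ (the differential inequality only weakens, and $\tilde f(v)\le f(v)\in L^1_{loc}$). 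The theorem quoted above then denies the existence of such a solution, a contradiction; hence $\liminf_{t\to0}f(t)=0$. Repeating the reasoning with the two inequalities interchanged, $g$ in the role of $f$, and no assumption needed on the remaining datum, yields $\liminf_{t\to0}g(t)=0$.

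Next I would prove that $\ei_{\RN}v=0$, the identity $\ei_{\RN}u=0$ being entirely symmetric. Assume $m:=\ei_{\RN}v>0$, so that $v\ge m$ a.e.; put $v_1:=v-m/2$, so that $v_1\in W^{1,p_2}_{L,loc}(\RN)$, $v_1\ge m/2>0$ a.e.\ and $\grl v_1=\grl v$; put $f_1(t):=f(t+m/2)$, which is continuous and strictly positive on $[0,+\infty[$; and put $\tilde\A_2(x,t,\xi):=\A_2(x,t+m/2,\xi)$. The W-$p_2$-C inequality holds for every $(x,t,\xi)$ with a constant $k$ independent of $t$, so $\tilde\A_2$ is again a W-$p_2$-C Carath\'eodory function (same $k$); and the weak Harnack inequality for \eqref{eq:superw} is, in the relevant cases, a consequence of structure conditions uniform in the second slot (e.g.\ S-$p_2$-C, which is invariant under $t\mapsto t+m/2$), so it holds for $\tilde\A_2$ as well. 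Now $(u,v_1)$ is a weak solution of \eqref{sys1w} with $(\A_2,f)$ replaced by $(\tilde\A_2,f_1)$: indeed $-\diverl(\tilde\A_2(x,v_1,\grl v_1))=-\diverl(\A_2(x,v,\grl v))\ge g(u)$ and $-\diverl(\A_1(x,u,\grl u))\ge f(v)=f_1(v_1)$, while the local integrability of the data and of $\abs{\tilde\A_2(\cdot,v_1,\grl v_1)}^{p_2'}=\abs{\A_2(\cdot,v,\grl v)}^{p_2'}$ is untouched. Since $f_1>0$ on $[0,+\infty[$, the theorem above produces a contradiction, whence $\ei_{\RN}v=0$. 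Applying the symmetric argument to the second inequality --- translating $u$ by $(\ei_{\RN}u)/2$, shifting $\A_1$ accordingly, interchanging the two equations, and noting that $g_1(t):=g\bigl(t+(\ei_{\RN}u)/2\bigr)>0$ --- gives $\ei_{\RN}u=0$.

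I do not anticipate a genuine obstacle: the substantive content is carried by the theorem already established, and what is left is the bookkeeping of the last paragraph, namely that the shifted operator $\A_{p_i}(x,\cdot+\mathrm{const},\cdot)$ inherits every hypothesis under which that theorem was proved. The W-$p_i$-C bound is manifestly invariant under a translation of the second argument, and the weak Harnack inequality is either postulated for a class of operators closed under such translations or deduced from uniform structure conditions that are themselves translation invariant; in either case the shifted problem meets the same assumptions and the proof of the quoted theorem carries over unchanged.
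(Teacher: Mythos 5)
Your proof is correct and follows the route the paper implicitly intends: Section~\ref{sec5} is presented as a verbatim transport of Section~\ref{sec4}, so this corollary is proved by replaying the Section~\ref{sec4} corollary's argument with the Section~\ref{sec5} analogue of Theorem~\ref{th:pos} in place of Theorem~\ref{th:pos}. You rightly isolate the one step that is not a literal repetition, namely that for a $t$-dependent $\A_{p_2}$ the translation $v\mapsto v-m/2$ must be accompanied by the shift $\tilde\A_2(x,t,\xi):=\A_2(x,t+m/2,\xi)$, after which the standing hypotheses --- weak $p_2$-coercivity and the weak Harnack inequality for supersolutions of (\ref{eq:superw}) --- have to hold for $\tilde\A_2$. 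The first is preserved with the same constant $k$, as you say; for the second your argument is slightly indirect, appealing to translation-invariant sufficient conditions such as strong $p_2$-coercivity. One can in fact sidestep re-verifying (WH) for $\tilde\A_2$ entirely by looking at where it is used in the proof of the quoted theorem: it is applied to $v=v_1+m/2$, which is a nonnegative supersolution for the \emph{original} $\A_2$, and the chain $\bigl(\mint_{B_R}v_1^\sigma\bigr)^{1/\sigma}\le\bigl(\mint_{B_R}v^\sigma\bigr)^{1/\sigma}\le c_H\,\ei_{B_{R/2}}v_1+c_H\,m/2$ is all the contradiction step needs, so the only weak Harnack inequality actually invoked is the one already assumed.
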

\begin{theorem} Let $\A_{p_i}=\A_{p_i}(x,\xi)$ for $i=1,2$.
   Let $f(x,v,u)=f(v)$ with $f:[0,+\infty[\to [0,+\infty[$ be continuous.
   No extra assumption on $g$.
   Let $(u,v)$ be a weak solution of (\ref{sys1w}) and let $\alpha\decl\ei_{\RR^N} v$.
   Then, $f(\alpha) =0$.
 \end{theorem}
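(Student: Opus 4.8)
The statement to prove is the last theorem of the excerpt: for the weakly-$p_i$-coercive system \eqref{sys1w} with $\A_{p_i}=\A_{p_i}(x,\xi)$ (no dependence on the second argument), $f(x,v,u)=f(v)$ continuous and nonnegative, and any weak solution $(u,v)$, the value $\alpha\decl\ei_{\RR^N}v$ satisfies $f(\alpha)=0$. This is the exact analogue of Theorem \ref{th:zero}, so the plan is to replay that proof, checking at each step that only the ingredients available in the abstract setting of Section \ref{sec5} are used: the \WPC{} hypothesis, the assumed weak Harnack inequality (WH) with exponents $\sigma_i>p_i-1$, Lemma \ref{le3.1}, and an analogue of the a priori estimate \eqref{27} valid for \WPC{} operators.

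\textbf{Step 1: reduce to $\alpha=0$.} Since $\A_{p_i}=\A_{p_i}(x,\xi)$ depends on $\grl u$ only (not on $u$ itself), the differential operators are invariant under adding a constant to $u$ or $v$. Hence replacing $v$ by $v-\alpha$ and $f$ by $f(\cdot+\alpha)$, I may assume $\ei_{\RR^N}v=0$, and the goal becomes $f(0)=0$.

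\textbf{Step 2: obtain the key a priori estimate.} I would first establish, for \WPC{} operators, the first inequality of \eqref{27}, namely
$$\frac1{|B_R|}\int_{B_R}f(v)\le c_5R^{-p_1}\bigl(\ei_{B_R}u\bigr)^{p_1-1}.$$
The derivation of Theorem \ref{ape} goes through verbatim in the \WPC{} setting: testing \eqref{ws1} (in its \WPC{} form) with $w_\varepsilon^\alpha\phi_1$, using $(\A_{p_1}(x,\xi)\cdot\xi)\ge k|\A_{p_1}(x,\xi)|^{p_1'}$ in place of strong coercivity to bound $|\A_{p_1}|^{p_1-1}$ in terms of $(\A_{p_1}\cdot\grl u)$, then Young's inequality and the choice $\phi_1(x)=\phi_0(S(\delta_{1/R}x))$, exactly as in the cited proof. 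The local integrability $v^{\sigma_1}\in L^1_{loc}$ needed for \eqref{25} follows from (WH). This is standard and I would just cite the mechanism of Theorem \ref{ape}, noting it only uses \WPC{}.

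\textbf{Step 3: argue by contradiction as in Theorem \ref{th:zero}.} Suppose $m\decl f(0)>0$. Fix $\sigma\in(0,\tfrac{Q(p_2-1)}{Q-p_2})$ (so that (WH) holds for $v$ with exponent $\sigma$) and set $h(t)\decl f(t^{1/\sigma})$; then $h$ is continuous with $h(0)=m>0$, and from Step 2 one gets, for $R>R_0$ large and $M\decl(\ei_{B_{R_0}}u)^{p_1-1}$,
$$R^{-p_1}M\ge c\,\frac1{|B_R|}\int_{B_R}h(v^\sigma).$$
By continuity choose $\alpha_1>0$ with $h(t)>m/2$ on $[0,\alpha_1]$, and let $h^*$ be the convex, nonincreasing, nonnegative function of \eqref{eq:tec22}. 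Then $h\ge h^*$, and Jensen's inequality gives
$$CR^{-p_1}\ge\mint_{B_R}h^*(v^\sigma)\ge h^*\!\left(\mint_{B_R}v^\sigma\right).$$
Letting $R\to+\infty$ forces $h^*\bigl(\mint_{B_R}v^\sigma\bigr)\to0$, hence $\liminf_{R\to\infty}\mint_{B_R}v^\sigma\ge\alpha_1$, and the assumed weak Harnack inequality then yields $\ei_{\RR^N}v\ge\alpha_1^{1/\sigma}>0$, contradicting $\ei_{\RR^N}v=0$. Therefore $f(0)=0$, i.e. $f(\alpha)=0$ in the original variables.

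\textbf{Main obstacle.} The only genuine point of care is Step 2: confirming that the full chain of a priori estimates of Theorem \ref{ape} — which in Section \ref{sec3} is written for $\plap{}$, a model \textbf{S}-$p$-\textbf{C} operator — survives when the operator is merely \WPC{}. The bound $|\A_{p_1}|^{p_1-1}\le k^{-1/p_1'}(\A_{p_1}\cdot\grl u)^{1/p_1'}$ replaces $|\grl u|^{p-1}$, so the right-hand side still has the structure needed for Young's inequality; and the left-hand coercivity term $\int|\grl u|^p u_\ell^{\alpha-1}\phi_1$ is replaced by $\int(\A_{p_1}\cdot\grl u)u_\ell^{\alpha-1}\phi_1$, which is exactly the quantity that appears after testing. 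Everything else (Lemma \ref{le3.1}, Jensen, (WH)) is already stated abstractly. So modulo recording this routine adaptation, the proof is a transcription of the argument for Theorem \ref{th:zero}.
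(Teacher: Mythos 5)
Your proposal is correct and follows the paper's (implicit) approach: the paper proves Theorem \ref{th:zero} for the model operator and simply asserts the \WPC{} reformulation in Section \ref{sec5}, and you replay the $h^*$/Jensen/Harnack argument of that proof after verifying that the \emph{a priori} estimate chain of Theorem \ref{ape} survives under \WPC{} coercivity, which is exactly what is needed. One cosmetic slip: in your Step 2 the displayed bound should read $|\A_{p_1}(x,\grl u)|\le k^{-1/p_1'}(\A_{p_1}\cdot\grl u)^{1/p_1'}$ (it is $|\A_{p_1}|$, not $|\A_{p_1}|^{p_1-1}$, that plays the role of $|\grl u|^{p-1}$), but your description of how this enters Young's and H\"older's inequalities is otherwise consistent and the conclusion is unaffected.
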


\begin{theorem}
Let  $f(x,v,u)=f(v)$, with $f:[0,+\infty[\to ]0,+\infty[$ be a continuous
  function satisfing ($f_0$).
  Let  $(u,v)$ be a weak solution of (\ref{sys1w}) such that ${\ei_{\RR^N}v}=0$. 
  Then there exists $c>0$ such that
  for $R$ sufficiently large, the following estimates hold
  \begin{eqnarray}
    {\ei_{B_R}v}\leq R^{-p_1/a} ({\ei_{B_R}u})^{\frac{p_1-1}{a}},\\
    {\int_{B_R}g(x,u,v)}\le c R^{Q-p_2-(p_2-1)p_1/a}  
      ({\ei_{B_R}u})^{\frac{(p_1-1)(p_2-1)}{a}},\\
    {\int_{B_R}g(x,u,v)}\le c R^{Q-p_2-Q(p_2-1)/a}  
      \left( \int_{A_R/2} f(v) \right)^\frac{p_2-1}{a}.
  \end{eqnarray}
Moreover if $\ei_{\RR^N} u=0$ and $g(x,u,v)=g(u)$ with  $g:[0,+\infty[\to ]0,+\infty[$ 
a continuous function satisfying  ($g_0$), then we have
\begin{eqnarray}
  \left({\ei_{B_R}u}\right)^{{ab-(p_1-1)(p_2-1)}}\leq c R^{-ap_2-p_1(p_2-1)},\\
  \left({\ei_{B_R}v}\right)^{{ab-(p_1-1)(p_2-1)}}\leq c R^{-bp_1-p_2(p_1-1)},\\
  \int_{B_R}f(v)dx\leq c
    R^{Q-p_1-(p_1-1)\frac{p_2}{b}-Q\frac{(p_1-1)(p_2-1)}{ab}}\left(\int_{A_{R/2}}f(v)dx\right)^{\frac{(p_1-1)(p_2-1)}{ab}}.
\end{eqnarray}
\end{theorem}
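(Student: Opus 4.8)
The plan is to transcribe the proof of Theorem~\ref{th:est} almost verbatim, the point being that that argument rests only on two inputs — the a priori bounds of Theorem~\ref{ape} and the density Lemma~\ref{le3.1} — and both survive the replacement of the $p_i$-Laplacian by a weakly-$p_i$-coercive operator. So the first task is to produce, for problem~\eqref{sys1w}, the analogues of \eqref{19}, \eqref{22}, \eqref{23}, \eqref{25} and of the final inequalities \eqref{27}. Following the proof of Theorem~\ref{ape}, I would take a nonnegative $\phi_1\in\Cuno_0(\RN)$, mollify $u$ to form $w_\varepsilon$ (with parameter $\ell>0$) and test the first inequality of \eqref{sys1w} with $w_\varepsilon^{\alpha}\phi_1$, $\alpha<0$, obtaining
\[
\int_{\RN} f(x,u,v)\,w_\varepsilon^{\alpha}\phi_1+|\alpha|\int_{\RN}(\A_1(x,u,\grl u)\cdot\grl w_\varepsilon)\,w_\varepsilon^{\alpha-1}\phi_1\le\int_{\RN}|\A_1(x,u,\grl u)|\,|\grl\phi_1|\,w_\varepsilon^{\alpha}.
\]
Now comes the only substantive change: after letting $\varepsilon\to0$ one uses the W-$p_1$-C inequality $(\A_1(x,u,\grl u)\cdot\grl u)\ge k|\A_1(x,u,\grl u)|^{p_1'}$ on the left and Young's inequality on the right, exactly as in Theorem~\ref{ape}, to arrive at the analogue of \eqref{19} in which $|\A_1(x,u,\grl u)|^{p_1'}$ plays the role of $|\grl u|^{p}$ and $p_1,p_1'$ play the role of $p,p'$; the second line of \eqref{sys1w} is treated the same way with $\A_2$, $p_2$, $\beta<0$.

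From here the deductions are identical to those in the proof of Theorem~\ref{ape}: testing with $\phi_1$ itself and combining H\"older's inequality with the inequality just proved gives the analogue of \eqref{22}; the cut-off $\phi_1(x)=\phi_0(S(\delta_{1/R}x))$ gives the analogue of \eqref{23}; one further H\"older inequality gives the analogue of \eqref{25}, which is available because $u^{\sigma_1},v^{\sigma_2}\in L^1_{loc}(\RN)$ by the assumed weak Harnack inequality with exponents $\sigma_i>p_i-1$; and finally feeding (WH) into that estimate produces the two bounds
\[
\frac1{|B_R|}\int_{B_R} f(x,u,v)\le c_5\,R^{-p_1}\big(\ei_{B_R}u\big)^{p_1-1},\qquad\frac1{|B_R|}\int_{B_R} g(x,u,v)\le\tilde c_5\,R^{-p_2}\big(\ei_{B_R}v\big)^{p_2-1},
\]
the exact analogues of \eqref{27}.

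With these two inequalities the proof runs as in Theorem~\ref{th:est}. From $(f_0)$ fix $c_f,\varepsilon>0$ with $f(t)\ge c_f t^{a}$ for $0<t<\varepsilon$ and put $T_\varepsilon^v:=\{x\in\RN:v(x)<\varepsilon\}$; then
\[
c_5\,R^{-p_1}\big(\ei_{B_R}u\big)^{p_1-1}\ \ge\ \frac1{|B_R|}\int_{B_R} f(v)\ \ge\ \frac{|B_R\cap T_\varepsilon^v|}{|B_R|}\,c_f\,\big(\ei_{B_R\cap T_\varepsilon^v}v\big)^{a},
\]
and since $\ei_{B_R\cap T_\varepsilon^v}v\ge\ei_{B_R}v$ and, by Lemma~\ref{le3.1} (applicable because $\ei_{\RN}v=0$ and (WH) holds for $v$ with exponent $\sigma_2>0$), $|B_R\cap T_\varepsilon^v|/|B_R|\to1$, we obtain the first estimate of the theorem. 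Combining it with the second bound of the previous display yields the second estimate, and redoing the computation over $A_{R/2}$ and using the annulus form of Lemma~\ref{le3.1} yields the third. If in addition $\ei_{\RN}u=0$ and $(g_0)$ holds, the symmetric argument gives $\ei_{B_R}u\le c\,R^{-p_2/b}(\ei_{B_R}v)^{(p_2-1)/b}$ for $R$ large; eliminating $\ei_{B_R}v$, respectively $\ei_{B_R}u$, between this and the first estimate produces the two power bounds, and the companion estimate $\int_{B_R}f(v)\le c\,R^{Q-p_1-Q(p_1-1)/b}\big(\int_{A_{R/2}}g(u)\big)^{(p_1-1)/b}$ combined with the third estimate gives the last inequality.

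The main point to check carefully is the passage $\varepsilon\to0$ in the test-function step: one needs $\grl w_\varepsilon\to\grl u$ in $L^{p_1}_{loc}$ and the fact that $|\A_1(\cdot,u,\grl u)|^{p_1'}\in L^1_{loc}(\RN)$ — which is part of the definition of weak solution of \eqref{sys1w} — in order to pass to the limit in the coercivity term and bound it below by a multiple of $\int|\A_1(x,u,\grl u)|^{p_1'}(u+\ell)^{\alpha-1}\phi_1$ through Fatou's lemma together with dominated convergence on the remaining factors; once this is granted everything reduces to the computations already carried out in the scalar case \cite{dm}. No difficulty comes from the anisotropy $p_1\ne p_2$, since the two inequalities of \eqref{sys1w} are handled completely independently of one another.
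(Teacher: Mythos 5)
The paper does not write out an explicit proof of this theorem; it simply states that the results of Section~4 ``can be reformulated for the problem (\ref{sys1w})''. Your proposal supplies exactly the intended argument: you correctly identify the single nontrivial modification — namely that after letting $\varepsilon\to0$ the left-hand term $\int(\A_1\cdot\grl u)u_\ell^{\alpha-1}\phi_1$ is bounded below by $k\int|\A_1|^{p_1'}u_\ell^{\alpha-1}\phi_1$ using (W-$p_1$-C), and that $|\A_1|^{p_1'}$ then plays the role of $|\grl u|^p$ throughout the chain \eqref{19}--\eqref{27} with $(p,p')$ replaced by $(p_1,p_1')$ and $(q,q')$ by $(p_2,p_2')$ — after which Theorem~\ref{ape} and Theorem~\ref{th:est} carry over verbatim, the weak Harnack inequality with exponents $\sigma_i>p_i-1$ and the consequent Lemma~\ref{le3.1} being hypotheses in Section~\ref{sec5}. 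This matches the paper's approach.
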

\begin{theorem}\label{th:mainSPC}
  Let  $f(x,v,u)=f(v)$, $g(x,u,v)=g(u)$
  with $f,g:]0,+\infty[\to [0,+\infty[$ be continuous
  satisfying ($f_0$) and ($g_0$)  respectively and assume that
  (\ref{eq:hyp}) holds.
  Then   (\ref{sys1w}) has no weak solution $(u,v)$ such that  
  $\ei_{\RR^N} u=\ei_{\RR^N} v=0$.
\end{theorem}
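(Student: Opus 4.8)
The plan is to recycle, essentially verbatim, the proof of Theorem~\ref{th:hyp}; the only substantive task is to verify that every a priori estimate invoked there---namely the chain of inequalities in Theorem~\ref{ape} and their use in Theorem~\ref{th:est}---continues to hold when the $p$- and $q$-Laplacians are replaced by the weakly coercive operators $\A_{p_1},\A_{p_2}$. So the first thing I would do is re-read the proof of Theorem~\ref{ape} and note that $\abs{\grl u}^{p-2}\grl u$ is used there only through the two facts $\abs{\grl u}^{p-2}\grl u\cdot\grl u=\abs{\grl u}^{p}\ge 0$ and $\abs{\,\abs{\grl u}^{p-2}\grl u\,}=(\abs{\grl u}^{p})^{1/p'}$. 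Under (W-$p_1$-C) these become $(\A_{p_1}(x,u,\grl u)\cdot\grl u)\ge k\,\abs{\A_{p_1}(x,u,\grl u)}^{p_1'}\ge 0$ and $\abs{\A_{p_1}(x,u,\grl u)}\le k^{-1/p_1'}\big((\A_{p_1}(x,u,\grl u)\cdot\grl u)\big)^{1/p_1'}$, the latter in the direction needed for the estimates. Hence, testing the first inequality of \eqref{sys1w} against $w_\varepsilon^{\alpha}\phi_1$ (the same mollified, $\ell$-shifted, truncated approximation $w_\varepsilon$ of $u$ used in Theorem~\ref{ape}), letting $\varepsilon\to 0$ and then $\ell\to 0$, and applying Young's and H\"older's inequalities exactly as there, one obtains the analogues of \eqref{19}, \eqref{22} and \eqref{25} with $\A_{p_1}\cdot\grl u$ in the role of $\abs{\grl u}^{p}$ and $p_1$ in the role of $p$ (the constants only pick up harmless powers of $k$). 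Since by hypothesis (WH) holds with some $\sigma_1>p_1-1$ for solutions of \eqref{eq:superw}, the analogue of \eqref{27} follows, and symmetrically for $g$, $p_2$, $\sigma_2$. Feeding these into the proofs of Lemma~\ref{le3.1} and of Theorem~\ref{th:est}---which use nothing about the operators beyond \eqref{27}---yields, under $(f_0)$ and $(g_0)$, the estimates \eqref{eq:iv} and \eqref{eq:v} with $(p,q)$ replaced throughout by $(p_1,p_2)$, i.e.\ the reformulation of Theorem~\ref{th:est} displayed just above.

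With these estimates in hand, the argument is word for word that of Theorem~\ref{th:hyp}. Arguing by contradiction, suppose $(u,v)$ is a weak solution of \eqref{sys1w} with $\ei_{\RR^N}u=\ei_{\RR^N}v=0$; in particular $u,v>0$ a.e.\ on $\RN$ (by Remark~\ref{consWH}, since (WH) holds for $-\diverl(\A_{p_i})\ge 0$). If $ab\le(p_1-1)(p_2-1)$, then \eqref{eq:hyp} is automatically satisfied (cf.\ \eqref{eq:hyp2}), and letting $R\to+\infty$ in the $(p_1,p_2)$-analogue of \eqref{eq:iv}---whose right-hand side tends to $0$ while $\ei_{B_R}u$ is positive and nonincreasing in $R$---gives a contradiction. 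If $ab>(p_1-1)(p_2-1)$, assume without loss of generality that $Q-p_1-(p_1-1)\frac{p_2}{b}\le Q\frac{(p_1-1)(p_2-1)}{ab}$; then the $(p_1,p_2)$-analogue of \eqref{eq:v} reads, for $R$ large,
$$\int_{B_R}f(v)\,dx\le c\Big(\int_{A_{R/2}}f(v)\,dx\Big)^{\frac{(p_1-1)(p_2-1)}{ab}}$$
with exponent strictly less than $1$, so $\int_{B_R}f(v)$ stays bounded as $R\to+\infty$, i.e.\ $f(v)\in L^1(\RN)$, and then the same inequality forces $f(v)=0$ a.e.\ on $\RN$. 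By $(f_0)$ there are $c_f,\epsilon>0$ with $f(t)\ge c_f t^{a}$ for $0<t<\epsilon$, hence $\int_{T_\epsilon^v}v^{a}=0$ where $T_\epsilon^v=\{x\in\RN\,:\,v(x)<\epsilon\}$; since $v>0$ a.e., Harnack's inequality gives $\abs{T_\epsilon^v}=0$, i.e.\ $v\ge\epsilon$ a.e., contradicting $\ei_{\RR^N}v=0$. The remaining alternative in \eqref{eq:hyp} is handled identically, with the roles of $f,u$ and $g,v$ interchanged.

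The only step that genuinely requires checking---as opposed to copying---is the first one: confirming that the test-function computation behind \eqref{19}--\eqref{27} is operator-agnostic once (W-$p_i$-C) and (WH) are available. I expect this to be routine but slightly tedious, the delicate point being the mollification/limit passage, where one needs $\grl w_\varepsilon\to\grl u$ in $L^{p_1}_{loc}$ together with the $L^1_{loc}$-convergence of $(\A_{p_1}(\cdot,u,\grl w_\varepsilon)\cdot\grl w_\varepsilon)\,w_\varepsilon^{\alpha-1}\phi_1$, using that $\abs{\A_{p_1}(\cdot,u,\grl u)}^{p_1'}\in L^1_{loc}(\RN)$ by the definition of weak solution. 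It is the one place where the weakly coercive hypothesis does any real work; everything after it is a transcription of Sections~\ref{sec3}--\ref{sec4}. Accordingly, I would present the proof simply as: the a priori estimates of Theorem~\ref{ape}, hence those of Theorem~\ref{th:est}, hold for \eqref{sys1w} with $p_1,p_2$ in place of $p,q$ by the computation above, and the conclusion then follows exactly as in the proof of Theorem~\ref{th:hyp}.
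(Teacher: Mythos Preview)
Your proposal is correct and matches the paper's intended approach: Section~\ref{sec5} states Theorem~\ref{th:mainSPC} without proof, presenting it as a direct reformulation of Theorem~\ref{th:hyp} once the a priori estimates of Theorem~\ref{ape} (and hence Theorem~\ref{th:est}) are seen to extend to \WPC\ operators satisfying (WH), exactly as you outline. One small slip in your mollification remark: the operator in the weak formulation remains $\A_{p_1}(\cdot,u,\grl u)$---only the test function involves $w_\varepsilon$---so the term whose convergence you need is $(\A_{p_1}(\cdot,u,\grl u)\cdot\grl w_\varepsilon)\,w_\varepsilon^{\alpha-1}\phi_1$, which passes to the limit by $\grl w_\varepsilon\to\grl u$ in $L^{p_1}_{loc}$ together with $\abs{\A_{p_1}(\cdot,u,\grl u)}^{p_1'}\in L^1_{loc}$.
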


\begin{remark}
 As a final observation, we point out that most of the results proved in this section hold for systems associated to (W-$p$-C) operators and power nonlinearities.
 We refer the interested reader to \cite{bvp}and \cite{mp}  for the Euclidean setting,  and to \cite{dam10}  for precise formulation and interesting open problems in the Carnot group framework.
\end{remark}

\section{Appendix}\label{sec6}

We quote some  facts on Carnot groups  and refer the interested reader to  
\cite{bon-lan-ugu07,fol75,fol-ste82} %,hei95}
  for  more detailed information on this subject.

A Carnot group is a connected, simply connected, nilpotent Lie
group $\G$ of dimension $N$ with graded Lie algebra ${\cal
G}=V_1\oplus \dots \oplus V_r$ such that $[V_1,V_i]=V_{i+1}$ for
$i=1\dots r-1$ and $[V_1,V_r]=0$. Such an integer $r$ is called the
\emph{step} of the group.
 We set $l=n_1=\dim V_1$, $n_2=\dim V_2,\dots,n_r=\dim V_r$.
A  Carnot group $\G$ of dimension $N$ can be identified, up to an
isomorphism, with the structure of a \emph{homogeneous Carnot
Group} $(\RN,\circ,\delta_R)$ defined as follows; we
identify $\G$ with $\RN$ endowed with a Lie group law $\circ$. We
consider $\RN$ split in $r$ subspaces
$\RN=\RR^{n_1}\times\RR^{n_2}\times\cdots\times\RR^{n_r}$ with
$n_1+n_2+\cdots+n_r=N$ and $\xi=(\xi^{(1)},\dots,\xi^{(r)})$ with
$\xi^{(i)}\in\RR^{n_i}$. 
We shall assume that for any $R>0$ the dilation
$\delta_R(\xi)=(R\xi^{(1)},R^2 \xi^{(2)},\dots,R^r \xi^{(r)})$ 
is a Lie group automorphism.
The Lie algebra of
left-invariant vector fields on $(\RN,\circ)$ is $\cal G$. For
$i=1,\dots,n_1=l $ let $X_i$ be the unique vector field in $\cal
G$ that coincides with $\partial/\partial\xi^{(1)}_i$ at the
origin. We require that the Lie algebra generated by
$X_1,\dots,X_{l}$ is the whole $\cal G$.

%If the above hypotheses are satisfied, we shall call
%$\G=(\RN,\circ,\delta_R)$ a \emph{homogeneous Carnot Group}.
We denote with $\grl$ the vector field $\grl\decl(X_1,\dots,X_l)^T$
and we call it \emph{horizontal vector field} and by  $\diverl$ the formal 
adjont on $\grl$, that is (\ref{eq:defdiv}).
Moreover, the vector
fields $X_1,\dots,X_{l}$ are homogeneous of degree 1 with respect
to $\delta_R$ and in this case
$Q= \sum_{i=1}^r i\,n_i=  \sum_{i=1}^r i\,\mathrm{dim}V_i$ is called the 
\emph{homogeneous dimension} of $\G$.
The \emph{canonical sub-Laplacian} on $\G$ is the
second order differential operator defined by
$$\Delta_G=\sum_{i=1}^{l} X_i^2=\diverl(\grl\cdot )$$ and for $p>1$ the
$p$-sub-Laplacian operator is
$$\Delta_{G,p} u\decl \sum_{i=1}^{l} X_i(\abs{\grl u}^{p-2}X_iu)=
   \plap{u}.$$
Since $X_1,\dots,X_{l}$
generate the whole $\cal G$, the sub-Laplacian $\Delta_G$ satisfies the
H\"ormander hypoellipticity  condition. 

In this paper $\nabla$ and $|\cdot|$ stand  respectively for the
usual gradient in $\RN$
and the Euclidean norm.

Let $\mu\in \C(\RR^N;\RR^l)$ be a matrix %with smooth entries,
$\mu\decl(\mu_{ij})$, $i=1,\dots,l$, $j=1,\dots,N$.
For $i=1,\dots,l$,  let $X_i$ and its formal adjoint $X_i^*$
be defined as
\begin{equation} X_i\decl\sum_{j=1}^N \mu_{ij}(\xi)\frac{\partial}{\partial \xi_j},
\qquad X_i^*\decl-\sum_{j=1}^N \frac{\partial}{\partial\xi_j}\left(\mu_{ij}(\xi)\cdot\right),
   \label{mu}\end{equation}
and let $\grl$ be the vector field defined by
$\grl\decl (X_1,\dots,X_l)^T=\mu\nabla$
and $\grl^*\decl(X_1^*,\dots,X_l^*)^T$.

For any vector field $h=(h_1,\dots,h_l)^T\in\Cuno(\Omega,\RR^l)$, we shall use
the following notation $ \diverl(h)\decl\diver(\mu^T h)$,
that is
\begin{equation}  \label{eq:defdiv}
   \diverl(h)=-\sum_{i=1}^l X_i^*h_i=-\grl^*\cdot h.\end{equation}

An assumption that we shall made (which actually is an assumption on the matrix
$ \mu$) is that the operator
$$  \Delta_Gu =\diverl(\grl u)$$
is a canonical sub-Laplacian on a Carnot group 
(see below for a more precise meaning).
The reader, which is not acquainted with these structures, 
can think to the special case 
of $\mu =I$, the identity matrix in $\RN$, that is the usual Laplace operator
in Euclidean setting.

A nonnegative continuous function $S:\RN\to\RR_+$ is called a 
\emph{homogeneous norm} on { $\G$}, if 
%it is smooth on $\RN\setminus\{0\}$,
$S(\xi^{-1})=S(\xi)$, $S(\xi)=0$ if and only if $\xi=0$, and it is
homogeneous of degree 1 with respect to $\delta_R$ (i.e.
$S(\delta_R(\xi))=R S(\xi)$).
A homogeneous norm $S$ defines on $\G$ a \emph{pseudo-distance} defined as
$d(\xi,\eta)\decl S(\xi^{-1}\eta)$, which
in general is not a distance.
If $S$ and $\tilde S$ are two homogeneous norms, then they are equivalent,
that is, there exists a constant
$C>0$ such that $C^{-1}S(\xi)\le \tilde S(\xi)\le CS(\xi)$.
Let $S$ be a homogeneous norm, then there exists a constant
$C>0$ such that $C^{-1}\abs\xi\le S(\xi)\le C\abs\xi^{1/r}$,
for $S(\xi)\le1$.
An example of homogeneous norm is
$  S(\xi)\decl\left(\sum_{i=1}^r\abs{\xi_i}^{2r!/i}\right)^{1/2r!}.$

Notice that if $S$ is a homogeneous norm differentiable a.e., 
then $\abs{\grl S}$ is homogeneous of degree 0 with respect to 
$\delta_R$; hence $\abs{\grl S}$ is bounded.

We notice that in a Carnot group, the Haar measure coincides with the Lebesgue measure.

\medskip

Special examples of Carnot groups are the
  Euclidean spaces $\RR^Q$.
  Moreover, if $Q\le 3$ then any Carnot group is the ordinary Euclidean
  space $\RR^Q$.

 The simplest nontrivial example of a Carnot group
  is the Heisenberg group $\hei^1=\RR^3$.
  For an integer $n\ge1$, the Heisenberg group $\hei^n$ is defined as follows:
  let $\xi=(\xi^{(1)},\xi^{(2)})$ with
  $\xi^{(1)}\decl(x_1,\dots,x_n,y_1,\dots,y_n)$ and $\xi^{(2)}\decl t$.
  We endow $\RR^{2n+1}$ with  the group law
$\hat\xi\circ\tilde\xi\decl(\hat x+\tilde x,\hat y+\tilde y,\hat t+ \tilde t+2\sum_{i=1}^n(\tilde x_i\hat y_i-\hat x_i \tilde y_i)).$
We consider the vector fields
\[X_i\decl\frac{\partial}{\partial x_i}+2y_i\frac{\partial}{\partial t},\
        Y_i\decl\frac{\partial}{\partial y_i}-2x_i\frac{\partial}{\partial t},
        \qquad\mathrm{for\ } i=1,\dots,n, \]
and the associated  Heisenberg gradient
$ \grh\decl (X_1,\dots,X_n,Y_1,\dots,Y_n)^T$.
The Kohn Laplacian $\lh$ is then the operator defined by
$\lh\decl\sum_{i=1}^nX_i^2+Y_i^2.$
The family of dilations is given by
$\delta_R(\xi)\decl (R x,R y,R^2 t)$ with homogeneous dimension
$Q=2n+2$.
In ${\hei^n}$ a canonical homogeneous norm is defined as
$\abs{\xi}_H\decl \left(\left(\sum_{i=1}^n x_i^2+y_i^2\right)^2+t^2\right)^{1/4}.$

\subsection*{Acknowledgements}
\medskip
{This work is  supported by the Italian MIUR National Research Project: Quasilinear Elliptic Problems and Related Questions.}

\end{document}